\newtheorem{theorem}{Theorem}[section]
\newtheorem{lemma}[theorem]{Lemma}
\theoremstyle{definition}
\newtheorem{remark}{Remark}
\definecolor{eqboxcolor}{named}{Aquamarine}
\definecolor{lawboxcolor}{named}{Lavender}
\newcommand{\di}[1]{\,\mathrm{d}#1}
\newcommand{\jump}[1]{\llbracket #1\rrbracket}
\DeclareMathOperator*{\esssup}{ess\,sup}
\newcommand{\R}{\mathbb{R}}
\newcommand{\Q}{\mathbb{Q}}
\newcommand{\N}{\mathbb{N}}
\newcommand{\Z}{\mathbb{Z}}
\newcommand{\CC}{\mathcal{C}}
\newcommand{\per}{{\overline{\#}}}
\newcommand{\dive}{\operatorname{div}}
\newcommand{\dist}{\operatorname{dist}}
\newcommand{\e}{\varepsilon}
\newcommand{\p}{\varphi}
\newcommand{\dx}{\,\mathrm{d}x}
\newcommand{\ds}{\,\mathrm{d}s}
\newcommand{\dy}{\,\mathrm{d}y}
\newcommand{\recon}[1]{\left[#1\right]_\e}
\newcommand{\err}{\mathrm{err}}
\newcommand{\cor}{\mathrm{cor}}
\title[Corrector estimates for a thermoelasticity problem] 
      {Corrector estimates for the homogenization of a two-scale thermoelasticity problem with a priori known phase transformations}
\author[Michael Eden, Adrian Muntean]{}
\subjclass{Primary: 35B27, 35B40; Secondary: 74F05.}
 \keywords{Homogenization, two-phase thermoelasticity, corrector estimates, time-dependent domains, distributed microstructures.}
 \email{leachim@math.uni-bremen.de}
 \email{adrian.muntean@kau.se}
\begin{document}
\maketitle

\centerline{\scshape Michael Eden}
\medskip
{\footnotesize
 \centerline{Center for Industrial Mathematics, FB 3}
   \centerline{University of Bremen, Germany}
   \centerline{ Bibliotheksstr.~1, 28359, Bremen}
} 

\medskip

\centerline{\scshape Adrian Muntean
}
\medskip
{\footnotesize
 \centerline{Department of Mathematics and Computer Science}
   \centerline{University of Karlstad, Sweden}
   \centerline{Universitetsgatan 2, 651 88 Karlstad}
}

\bigskip

\begin{abstract}
We investigate \emph{corrector estimates} for the solutions of a thermoelasticity problem posed in a highly heterogeneous two-phase medium  and its corresponding two-scale thermoelasticity model which was derived in~\cite{EM17} by two-scale convergence arguments.
The medium in question consists of a connected matrix with disconnected, initially periodically distributed inclusions separated by a sharp interface undergoing \emph{a priori} known phase transformations.
While such estimates seem not to be obtainable in the fully coupled setting, we show that for some simplified scenarios optimal convergence rates can be proven rigorously.
The main technique for the proofs are energy estimates using special reconstructions of two-scale functions and particular operator estimates for periodic functions with zero average.
Here, additional regularity results for the involved functions are necessary.
\end{abstract}
\section{Introduction}
We aim to derive quantitative estimates that show the quality of the upscaling process of a coupled linear  thermoelasticity system with a priori known phase transformations posed in a high-contrast media to its corresponding two-scale thermoelasticity system.


The problem we have in mind is posed in a medium where the two building components, initially assumed to be periodically distributed, are different solid phases of the same material in which phase transformations that are a priori known occur.
As a main effect, the presence of phase transformations leads to evolution problems in time-dependent domains that are not necessarily periodic anymore.

In our earlier paper~\cite{EM17}, we rigorously studied the well-posedness of such a thermoelasticity problem and conducted a homogenization procedure via the two-scale convergence technique (cf.~\cite{Al92} for details).
Those results were obtained after transforming the problem to a fixed reference geometry.
In this work, our goal is to further investigate the connection of those problems and to derive an upper bound for the convergence rate (in some yet to defined sense) of their solutions.
While historically a tool to also justify the homogenization (via asymptotic expansions) in the first place, such estimates, which in the homogenization literature are usually called \emph{error} and \emph{corrector estimates}, provide a means to evaluate the accuracy of the upscaled model.
Also, such estimates are especially interesting from a computational point of view.
In the context of \emph{Multiscale FEM}, for example, they are needed to ensure/control the convergence of the method, we refer to, e.g.,~\cite{AB05,HW97}.

The basic idea is to estimate the $L^2-$ and $H^1-$errors of the solutions of the problems using energy techniques, additional regularity results, and special operator estimates for functions with zero average. Since, in general, the solutions of the $\e$-problem do not even have the same domain as the solutions of the two-scale problem, we additionally rely on so-called \emph{macroscopic reconstructions} (we refer to Section~\ref{sec:preliminaries}).
The difficulty in getting such estimates in our specific scenario is twofold: First, the coupling between the quasi-stationary momentum equation and the heat equation and, second, the interface movement which (after transforming to a reference domain) leads to additional terms as well as time-dependent and non-periodic coefficients functions. 

As typical for a corrector estimate result, the main goal is to show that there is a constant $C>0$ which is independent on the particular choice of $\e$ such that
%
\begin{multline}\label{goal_est}
\|\Theta_\err^\e\|_{L^\infty(S\times\Omega)}+\|U_{\err}^\e\|_{L^\infty(S;L^2(\Omega))^{3}}+\|\nabla\Theta_\cor^\e\|_{L^2(S\times\Omega_A^\e)^3}+\|\nabla U_{\cor}^\e\|_{L^\infty(S;L^2(\Omega_A^\e))^{3\times3}}\\
+\e\|\nabla\Theta_\err\|_{L^2(S\times\Omega_B^\e)^3}+\e\left\|\nabla U_\cor^\e\right\|_{L^\infty(S;L^2(\Omega_B^\e))^{3\times3}}\leq C (\sqrt{\e}+\e).
\end{multline}
Here, $\Omega$ represents the full medium, $\Omega_A^\e$ the fast-heat-conducting connected matrix and $\Omega_B^\e$ the slow-heat-conducting inclusions.
For the definitions of the (error and corrector) functions, we refer the reader to the beginning of Section~\ref{sec:corrector}.

Unfortunately, in the general setting of a fully-coupled thermoelasticity problem with moving interface, such corrector estimates as stated in~\eqref{goal_est} seem to not be obtainable; in Section~\ref{subsec:overall}, we point out where and why the usual strategy for establishing such estimates is bound to fail.

Instead, we show that there are a couple of possible simplifications of the full model in which ~\eqref{goal_est} holds:
\begin{itemize}
\item[(a)] \emph{Weakly coupled problem}: If we assume either the \emph{mechanical dissipation} or the \emph{thermal stresses} to be negligible, we are lead to weakly coupled problems, where the desired estimates can be established successively, see Theorem~\ref{cor1}.
We note that the regularity requirements are higher in the case of no thermal stress compared to the case of mechanical dissipation.
\item[(b)] \emph{Microscale coupling}: If \emph{mechanical dissipation} and \emph{thermal stress} are only really significant in the slow-conducting component and negligible in the connected matrix part, the estimate hold, see Theorem~\ref{cor2}.
\end{itemize}
As pointed out in~\cite{W99}, neglecting the effect of mechanical dissipation is a step that is quite usual in modeling thermoelasticity problems.

Convergence rates for specific one-phase problems with periodic constants (some of them posed in perforated domains) were investigated in, e.g.,~\cite{BP13,BPC98,CP98}.
In~\cite{E04}, convergence rates for a complex nonlinear problem modeling liquid-solid phase transitions via a phase-field approach were derived.
A homogenization result including corrector estimates for a two-scale diffusion problem posed in a locally-periodic geometry was proven in~\cite{MV13,VM11}.
Here, similar to our scenario, the microstructures are non-uniform, non-periodic, and assumed to be \emph{a priori} known; the microstructures are however time independent and there are no coupling effects.
For some corrector estimate results in the context of thermo and elasticity problems, we refer to~\cite{BP13,STV13}, e.g.
We also want to point out to the newer and different philosophy in which the solutions are compared in the two-scale spaces (e.g., $L^2(\Omega;H_\#^1(Y_B)$) as opposed to the, possibly $\e$-dependent, spaces for the $\e$-problem (e.g., $L^2(\Omega_B^\e)$),\footnote{Here, we have used notation and domains as introduced in Section~\ref{sec:setting}.} a method which requires considerably less regularity on part of the solutions of the homogenized problems, we refer to~\cite{G04,MR16,R15}.

The paper is organized as follows: In Section~\ref{sec:setting}, we introduce the $\e$-microscopic geometry and formulate the thermoelasticity micro-problem in the moving geometry and transformed to a fixed reference domain and also state the homogenized two-scale problem.
The assumptions on our data, some regularity statements, and auxiliary estimate results are then collected in Section~\ref{sec:preliminaries}.
Finally, in Section~\ref{sec:corrector}, we focus on establishing convenient $\e$-control for the terms arising in the \emph{error formulation} (see equation~\eqref{corrector_eq}).
Based on these estimates, the corrector estimate~\eqref{goal_est} is then shown to hold for the above described cases $(a)$ and $(b)$.

\section{Setting}\label{sec:setting}
\subsection{Interface movement}
The following notation is taken from~\cite{EM17}.
Let $S=(0,T)$, $T>0$, be a time interval.
Let $\Omega$ be the interior of a union of a finite number of closed cubes $Q_j$, $1\leq j\leq n$, $n\in\N$, whose vertices are in $\Q^3$ such that, in addition, $\Omega$ is a Lipschitz domain.

In addition, we denote the outer normal vector of $\Omega$ with $\nu=\nu(x)$.
Let $Y=(0,1)^3$ be the open unit cell in $\R^3$.
Take $Y_A,$ $Y_B\subset Y$  two disjoint open sets, such that $Y_A$ is connected, such that $\Gamma:=\overline{Y_A}\cap\overline{Y_B}$ is a $C^3$ interface, $\Gamma=\partial Y_B$, $\overline{Y_B}\subset Y$, and $Y=Y_A\cup Y_B\cup \Gamma$.
With $n=n(y)$, $y\in\Gamma$, we denote the normal vector of $\Gamma$ pointing outwards of $Y_B$.

\begin{figure}[!ht]
\hspace{1cm}
\begin{tikzpicture}[scale=1.2]
	\pgftext{\includegraphics[width=0.215\textwidth,]{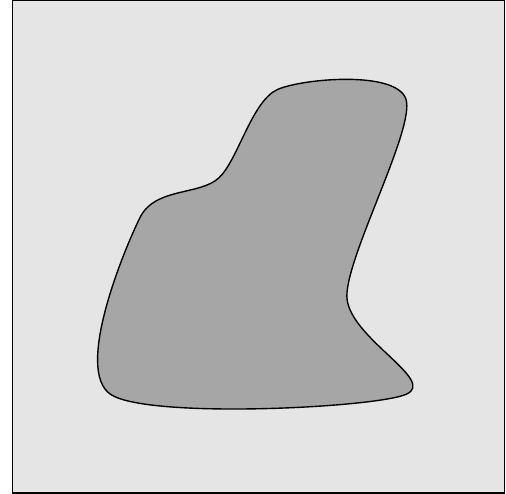}};
	\draw[thin] (0.6,0.7) -- (1.4,1.7);
	\draw (1.4,1.7) node[above] {{\small{$Y_B(0)$}}};
	\draw[thin] (-0.5,1.3) -- (-1,1.7);
	\draw (-1,1.7) node[above] {{\small{$Y_A(0)$}}};
	\draw[thin] (0.75,0.2) -- (1.8,0.5);
	\draw (1.8,0.5) node[right] {{\small{$\Gamma(0)$}}};
\end{tikzpicture}
\hspace{1cm}
\begin{tikzpicture}[scale=1.2]
	\pgftext{\includegraphics[width=0.208\textwidth]{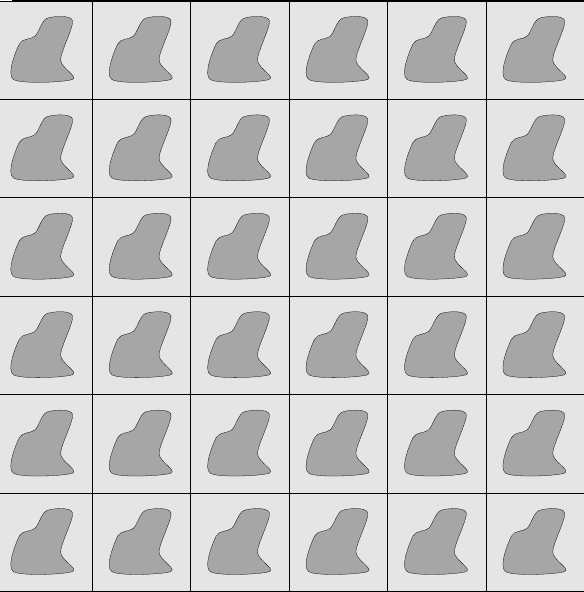}};
	\draw[thin] (-1.6,-1.6) rectangle (1.6,1.6);
	\draw[thin] (0.75,1.3) -- (1.1,1.9);
	\draw[thin] (1.3,1.35) -- (1.1,1.9);
	\draw (1.1,1.9) node[above] {{\small{$\Omega_B^\e(0)$}}};
	\draw[thin] (-0.45,1.5) -- (-1,1.9);
	\draw[thin] (-1.1,1.5) -- (-1,1.9);
	\draw (-1,1.9) node[above] {{\small{$\Omega_A^\e(0)$}}};
	\draw[thin] (0.9,0.2) -- (1.8,0.7);
	\draw (1.8,0.7) node[right] {{\small{$\Gamma^\e(0)$}}};
	\draw[thick] (1.52,0) -- (1.68,0);
	\draw[thick] (1.52,-0.55) -- (1.68,-0.55);
	\draw[thick] (1.6,0) -- (1.6,-0.55);
	\draw (1.65,-0.25) node[right] {{\small{$\e$}}};
\end{tikzpicture}
\caption{Reference geometry and the resulting $\e$-periodic initial configuration. Note that for $t\neq0$, these domains typically loose their periodicity.}
\label{s:fig}
\end{figure}

For $\e>0$, we introduce the $\e Y$-periodic, initial domains $\Omega_A^\e$ and $\Omega_B^\e$ and interface $\Gamma^\e$ representing the two phases and the phase boundary, respectively, via ($i\in\{A,B\}$)
\begin{align*}
	\Omega^\e_i=\Omega\cap\left(\bigcup_{k\in\Z^3}\e(Y_i+k)\right),\qquad
	\Gamma^\e=\Omega\cap\left(\bigcup_{k\in\Z^3}\e(\Gamma+k)\right).
\end{align*}
Here, for a set $M\subset\R^3$, $k\in\Z^3$, and $\e>0$, we employ the notation
$$
	\e(M+k):=\left\{x\in\R^3\ : \ \frac{x}{\e}-k\in M\right\}.
$$
From now on, we take $\e=(\e_n)_{n\in\N}$ to be a sequence of monotonically decreasing positive numbers converging to zero such that $\Omega$ can be represented as the union of cubes of size $\e$.
Note that this is possible due to the assumed structure of $\Omega$.

Here $n^\e=n(\frac{x}{\e})$, $x\in\Gamma^\e$, denotes the unit normal vector (extended by periodicity) pointing outwards $\Omega_B^\e$ into $\Omega_A^\e$.
The above construction ensures that  $\Omega_A^\e$ is connected and that $\Omega_B^\e$ is disconnected. 
We also have that $\partial\Omega_B^\e\cap\partial\Omega=\emptyset$.

We assume that $s\colon\overline{S}\times\overline\Omega\times\R^3\to\overline{Y}$ is a function such that
\begin{enumerate}
	\item $s\in C^2(\overline{S};C^2(\overline{\Omega})\times C^2_\#(Y))$,\footnote{%
	Here, and in the following, the $\#$ subscript denotes periodicity, i.e., for $k\in\N_0$, we have $C^k_\#(Y)=\{f\in C^k(\R^3): f(x+e_i)=f(x)\ \text{for all} \ x\in\R^3\}$, $e_i$ basis vector of $\R^3$.}
	\item $s(t,x,\cdot)_{|\overline{Y}}\colon\overline{Y}\to\overline{Y}$ is bijective for every $(t,x)\in\overline{S}\times\overline{\Omega}$,
	\item $s^{-1}\in C^2(\overline{S};C^2(\overline{\Omega})\times C^2_\#(Y))$,\footnote{Here, $s^{-1}\colon\overline{S}\times\overline\Omega\times\R^3\to\overline{Y}$ is the unique function such that $s(t,x,s^{-1}(t,x,y))=y$ for all $(t,x,y)\in\overline{S}\times\overline{\Omega}\in\overline{Y}$ extended by periodicity to all $y\in\R^3$.}
	\item $s(0,x,y)=y$ for all $x\in\overline{\Omega}$ and all $y\in\overline Y$,
	\item $s(t,x,y)=y$ for all $(t,x)\in\overline{S}\times\overline{\Omega}$ and all $y\in\partial Y$,
	\item there is a constant $c>0$ such that $\mathrm{dist}(\partial Y,\gamma)>c$ for all $\gamma\in s(t,x,\Gamma)$ and $(t,x)\in\overline{S}\times\overline{\Omega}$,
	\item $s(t,x,y)=y$ for all $(t,x)\in\overline{S}\times\overline{\Omega}$ and for all $y\in Y$ such that $\mathrm{dist}(\partial Y,y)<\frac{c}{2}$,
	\item there are constants $c_s,C_s>0$ such that
	      $$
					c_s\leq\det(\nabla s(t,x,y))\leq C_s,\quad (t,x,y)\in\overline{S}\times\overline\Omega\times\R^3
				$$
\end{enumerate}
and set the $(t,x)$-parametrized sets
\begin{align*}
	Y_A(t,x)=s(t,x,Y_A),\quad
	Y_B(t,x)=s(t,x,Y_B),\quad
	\Gamma(t,x)=s(t,x,\Gamma).
\end{align*}

We introduce the operations
\begin{align*}
	[\cdot]&\colon\R^3\to\Z^3,\quad [x]=k \ \text{such that}\  x-[x]\in Y,\\
	\{\cdot\}&\colon\R^3\to Y,\quad \{x\}=x-[x]
\end{align*}
and define the $\e$-dependent function\footnote{This is the typical notation in the context of homogenization via the \emph{periodic unfolding method}, see, e.g., \cite{CDG08, D12}.}
$$
	s^\e\colon\overline{S}\times\overline{\Omega}\to\R^3,\quad s^\e(t,x):=\e\left[\frac{x}{\e}\right]+\e s\left(t,\e\left[\frac{x}{\e}\right],\frac{x}{\e}\right).
$$
For $i\in\{A,B\}$ and $t\in\overline{S}$, we set the time dependent sets $\Omega_i^\e(t)$ and $\Gamma^\e(t)$ and the corresponding non-cylindrical space-time domains $Q_i^\e$ and space-time phase boundary $\Sigma^\e$ via
\begin{alignat*}{2}
	\Omega_i^\e(t)&=s^\e(t,\Omega_i^\e),&\qquad Q_i^\e&=\bigcup_{t\in S}\left(\{t\}\times\Omega_i^\e(t)\right),\\
	\Gamma^\e(t)&=s^\e(t,\Gamma^\e),&\qquad\Sigma^\e&=\bigcup_{t\in S}\left(\{t\}\times\Gamma^\e(t)\right),
\end{alignat*}
and denote by $n^\e=n^\e(t,x)$, $t\in S$, $x\in\Gamma^\e(t)$, the unit normal vector pointing outwards $\Omega_B^\e(t)$ into $\Omega_A^\e(t)$.
The time-dependent domains $\Omega_i^\e$ host the phases at time $t\in\overline{S}$ and model the movement of the interface $\Gamma^\e$.
We emphasize that, for any $t\neq0$, the sets $\Omega_A^\e(t)$, $\Omega_B^\e(t)$, and $\Gamma^\e(t)$ are, in general, not periodic.

We introduce the transformation-related functions (here, $W_\Gamma^\e$ is the normal velocity and $H_\Gamma^\e$ the mean curvature of the interface) via
\begin{alignat*}{2}
	F^\e&\colon\overline{S}\times\overline{\Omega}\to\R^{3\times3},\qquad &
		F^\e(t,x)&:=\nabla s^\e(t,x),\\
	J^\e&\colon\overline{S}\times\overline{\Omega}\to\R,\qquad&
		J^\e(t,x)&:=\det\left(\nabla s^\e(t,x)\right),\\
	v^\e&\colon\overline{S}\times\overline{\Omega}\to\R^{3},\qquad&
		v^\e(t,x)&:=\partial_ts^\e(t,x),\\
	W_\Gamma^\e&\colon\overline{S}\times\Gamma^\e\to\R,\quad&
		W_\Gamma^\e(t,x)&:=v^\e(t,x)\cdot n^\e(t,s^\e(t,x)),\\
	H_\Gamma^\e&\colon\overline{S}\times\Gamma^\e\to\R,\quad&
		H_\Gamma^\e(t,x)&:=-\dive\left((F^{\e})^{-1}(t,x)n^\e(t,s^\e(t,x))\right)
\end{alignat*}
for which we have the following estimates
\begin{multline}\label{s:estimate_movement}
	\left\|F^\e\right\|_{L^\infty(S\times\Omega)^{3\times3}}+\left\|(F^\e)^{-1}\right\|_{L^\infty(S\times\Omega)^{3\times3}}+\left\|J^\e\right\|_{L^\infty(S\times\Omega)}\\
	+\e^{-1}\left\|v^\e\right\|_{L^\infty(S\times\Omega)^3}
	+\e^{-1}\left\|W_\Gamma^\e\right\|_{L^\infty(S\times\Gamma^\e)}+\e\left\|H_\Gamma^\e\right\|_{L^\infty(S\times\Gamma^\e)}\leq C.
\end{multline}
By design, the constant $C$ entering~\eqref{s:estimate_movement} is independent on the choice of $\e$.
Note that the same estimates also hold for the time derivatives of these functions.
\subsection{$\e$-problem and homogenization result}

The bulk equations of the coupled thermoelasticity problem are given as (we refer to~\cite{B56,EM17,K79})
\begin{subequations}\label{p:full_problem_moving}
\begin{alignat}{2}
	-\dive(\CC_A^\e e(u_A^\e)-\alpha_A^\e\theta_A\mathds{I}_3)&=f_{u_A}^\e&\quad&\text{in}\ \ Q_A^\e,\label{p:full_problem_moving:1}\\
	-\dive(\CC_B^\e e(u_B^\e)-\alpha_B^\e\theta_B\mathds{I}_3)&=f_{u_B}^\e&\quad&\text{in}\ \ Q_B^\e,\label{p:full_problem_moving:2}\\
	\partial_t\left(\rho_Ac_{dA}\theta_A^\e+\gamma_A^\e\dive u_A^\e\right)-\dive(K_A^\e\nabla\theta_A^\e)&=f_{\theta_A}^\e&\quad&\text{in}\ \ Q_A^\e,\label{p:full_problem_moving:3}\\
	\partial_t\left(\rho_Bc_{dB}\theta_B^\e+\gamma_B^\e\dive u_B^\e\right)-\dive(K_B^\e\nabla\theta_B^\e)&=f_{\theta_B}^\e&\quad&\text{in}\ \ Q_B^\e.\label{p:full_problem_moving:4}
\end{alignat}
Here, $\CC_i^\e\in\R^{3\times3\times3\times3}$ are the \emph{stiffness} tensors, $\alpha_i^\e>0$ the \emph{thermal expansion} coefficients, $\rho_i>0$ the \emph{mass densities}, $c_{di}>0$ the \emph{heat capacities}, $\gamma_i^\e>0$ are the \emph{dissipation coefficients}, $K_i^\e\in\R^{3\times3}$ the \emph{thermal conductivities}, and $f_{u_i}^\e$, $f_{\theta_i}^\e$ are volume densities.
In addition, $e(v)=1/2(\nabla v+\nabla v^T)$ denotes the linearized strain tensor and $\mathds{I}_3$ the identity matrix.

At the interface between the phases, we assume continuite of both the temperature and deformation and the fluxes of force and heat densities to be given via the mean curvature and the interface velocity, resp.:\footnote{Here, the scaling via $\e^2$ counters the effects of both the interface surface area, note that $\e|\Gamma^\e|\in\mathcal{O}(1)$, and the curvature itself, note that $\e|\widetilde{H_\Gamma^\e}|\in\mathcal{O}(1)$.}
\begin{alignat}{2}
	\jump{u^\e}=0,\quad \jump{\theta^\e}&=0&\quad&\text{on}\ \ \Sigma^\e,\label{p:full_problem_moving:5}\\
	\jump{\CC^\e\e(u^\e)-\alpha^\e\theta^\e\mathds{I}_3}n^\e&=-\e^2H_\Gamma^\e n^\e&\quad&\text{on}\ \ \Sigma^\e,\\
	\jump{\rho c_d}\theta^\e W_{\Gamma}^\e+\jump{\gamma^\e\dive u^\e}W_\Gamma^\e-\jump{K^\e\nabla\theta^\e}\cdot n^\e&= L_{AB}W_\Gamma^\e&\quad&\text{in}\ \ \Sigma^\e.
\end{alignat}
Here, $\jump{v}:=v_A-v_B$ denotes the jump across the boundary separating phase $A$ from phase $B$, $\sigma_0>0$ is the coefficient of surface tension, and $L_{AB}\in\R$ is the latent heat

Finally, at the boundary of $\Omega$ and for the initial condition, we pose
\begin{alignat}{2}
	u_A^\e&=0&\quad&\text{on}\ \ S\times\partial\Omega_A^\e,\\
	\theta_A^\e&=0&\quad&\text{on}\ \ S\times\partial\Omega_A^\e,\\
	\theta^\e(0)&=\theta_{0}^\e&\quad&\text{on}\ \ \Omega,\label{p:full_problem_moving:9}\\
	u^\e(0)&=0&\quad&\text{on}\ \ \Omega,
\end{alignat}
where $\theta_{0}^\e$ is some (possibly highly heterogeneous) initial temperature distribution.
The scaling of the coefficients is chosen as
\begin{alignat*}{4}
	\CC_A^\e&=\CC_A,&\quad K_A^\e&=K_A,&\quad\alpha_A^\e&=\alpha_A,&\quad\gamma_A^\e&=\gamma_A,\\
	\CC_B^\e&=\e^2\CC_B,&\quad K_B^\e&=\e^2K_B,&\quad\alpha_B^\e&=\e\alpha_B,&\quad\gamma_B^\e&=\e\gamma_B.
\end{alignat*}
\end{subequations}
\begin{remark}
The simplified models described in the introduction (for which corrector estimates can be established) correspond to $\alpha_A=\alpha_B=0$ or $\gamma_A=\gamma_B=0$ (case (1), \emph{weakly coupled problem}) and $\alpha_A=\gamma_A=0$ (case (2), \emph{mirco coupled} problem).
\end{remark}

Now, taking the back-transformed quantities (defined on the initial, periodic domains $\Omega_i^\e$) $U_i^\e\colon S\times\Omega_i^\e\to\R^3$ and $\Theta_i^\e\colon S\times\Omega_i^\e\to\R^3$ given via $U_i^\e(t,x)=u_i^\e(t,s^{-1,\e}(t,x))$ and $\Theta_i^\e(t,x)=\theta_i^\e(t,s^{-1,\e}(t,x))$,\footnote{%
Here, $s^{-1,\e}\colon \overline{S}\times\overline{\Omega}\to\overline{\Omega}$ is the inverse function of $s^\e$.} we get the following problem in fixed coordinates (for more details regarding the transformation to a fixed domain, we refer to~\cite{D12,M08, PSZ13}):\footnote{Here, the superscript $r,\e$ denotes the transformed quantities (w.r.t~$s^\e$), for example $K_A^{r,\e}=J^\e(F^\e)^{-1}K_A(F^\e)^{-T}$ (cf.~\cite{EM17}).}
\begin{subequations}\label{p:ref}
\begin{alignat}{2} 
	-\dive\left(\CC_A^{r,\e}(U_A^\e)-\Theta_A^\e\alpha_A^{r,\e}\right)&=f_{u_A}^{r,\e} \quad &&\text{in}\ \ S\times\Omega_A^\e,\\
	-\dive\left(\e^2\CC_B^{r,\e} e(U_B^\e)-\e\Theta_B^\e\alpha_B^{r,\e}\right)&=f_{u_B}^{r,\e} \quad &&\text{in}\ \ S\times\Omega_B^\e,\\
	\begin{split}
	\partial_t\left(c_A^{r,\e}\Theta_A^\e+\gamma_A^{r,\e}:\nabla U_A^\e\right)-\dive\left(K_A^{r,\e}\nabla\Theta_A^\e\right)\quad\\
	-\dive\left(\left(c_A^{r,\e}\Theta_A^\e+\gamma_A^{r,\e}:\nabla U_A^\e\right)v^{r,\e}\right)&=f_{\theta_A}^{r,\e}
	\end{split}\quad&&\text{in}\ \ S\times\Omega_A^\e,\\
	\begin{split}
	\partial_t\left(c_B^{r,\e}\Theta_B^\e+\e\gamma_B^{r,\e}:\nabla U_B^\e\right)-\dive\left(\e^2K_B^{r,\e}\nabla\Theta_B^\e\right)\quad\\
	-\dive\left(\left(c_B^{r,\e}\Theta_B^\e+\e\gamma_B^{r,\e}:\nabla U_B^\e\right)v^{r,\e}\right)
	&=f_{\theta_B}^{r,\e}
	\end{split}\quad&&\text{in}\ \ S\times\Omega_B^\e,
\end{alignat}
\end{subequations}
complemented with interface transmission, boundary, and initial conditions.

Now, for $j,k\in\{1,2,3\}$ and $y\in Y$, set $d_{jk}=(y_j\delta_{1k},y_j\delta_{2k},y_j\delta_{3k})^T$.
For $t\in S$, $x\in\Omega$, let $\tau_j^\theta(t,x,\cdot)\in H^1_\per(Y_A)$, $\tau^u_{jk}(t,x,\cdot)$, $\tau^u(t,x,\cdot)\in H^1_\per(Y_A)^3$ be the solutions to the following variational cell problems\footnote{Here, and in the following, the superscript $r$ denotes the transformed quantities (w.r.t~$s$), e.g., $K_A^{r}=J(F)^{-1}K_A(F)^{-T}$.}
\begin{subequations}\label{cell}
\begin{align}
	0&=\int_{Y_A}K_A^{r}\left(\nabla_y\tau_j^\theta+e_j\right)\cdot\nabla_yv\dy\quad\text{for all}\ \ v\in H^1_\#(Y_A),\label{cell:1}\\
	0&=\int_{Y_A}\CC_A^{r}e_y(\tau^u_{jk}+d_{jk}):e_y(v)\dy\quad\text{for all}\ \ v\in H^1_\#(Y_A)^3,\label{cell:2}\\
	0&=\int_{Y_A}\CC_A^{r}e_y(\tau^u):e_y(v_A)\dy-\int_{Y_A}\alpha_A^{r}:\nabla_yv\dy\quad\text{for all}\ \ v\in H^1_\#(Y_A)^3\label{cell:3}.
\end{align}
\end{subequations}
Using these functions, we introduce the fourth rank tensor $\CC$ and the matrix $K$ via
\begin{alignat*}{2}
\left(\CC\right)_{i_1i_2j_1j_2}&=\CC_A^{r}e_y\left(\tau_{i_1i_2}^u+d_{i_1i_2}\right):e_y\left(\tau_{j_1j_2}^u+d_{j_1j_2}\right),\\
\left(K\right)_{ij}&=K_A^{r}\left(\nabla_Y\tau_j^\theta+e_j\right)\cdot\left(\nabla_Y\tau_i^\theta+e_i\right).
\end{alignat*}
Furthermore, we define the following set of averaged coefficients
\begin{alignat*}{2}
C^h&=\int_{Y_A}C\di{y},&\quad
	\alpha_A^{h}&=\int_{Y_A}\left(\alpha_A^{r}C_A^re_y(\tau^{m})\right)\dy,\\
H_\Gamma^h&=\int_\Gamma H_\Gamma n\di{s},&\quad
	f_u^{h}&=\int_{Y_A}f_{u_A}^{r}\dy+\int_{Y_B}f_{u_B}^{r}\dy,\\
c^{h}&=\rho_Ac_{dA}\left|Y_A\right|+\int_{Y_A}\gamma_A^{r}:\nabla_y\tau^u\dy,&\quad
	W_{\Gamma}^{h}&=\int_{\Gamma}W_\Gamma^{r}\di{s},\\
K_A^{h}&=\int_{Y_A}K\di{y},&\quad
	f_\theta^{h}&=\int_{Y_A}f_{\theta_A}^{r}\dy+\int_{Y_B}f_{\theta_B}^{r}\dy,\\
A^{h}(\Theta_B,U_B)&=\int_{Y_B}\left(c_B^{r}\Theta_B+\gamma_B^{r}U_B\right)\di{y},&\quad
\gamma_A^{h}&=\int_{Y_A}\left(\gamma_A^{r}+\gamma_A^{r}\nabla_y\tau_{jk}^{m}\right)\dy.
\end{alignat*}
After a homogenization procedure (the details of which are presented in~\cite{EM17}), we get the following upscaled two-scale model 

\begin{subequations}\label{p:refhom}
\begin{alignat}{2}
-\dive\left(\CC_A^{h}e(u_A)-\alpha_A^{h}\theta_A\right)&=f_u^{h}+H_\Gamma^{h}\quad &&\text{in}\ \ S\times\Omega,\label{h:hom_prob:1}\\
\partial_t\left(c^{h}\theta_A+\gamma_A^{h}:\nabla u_A+A^{h}(\Theta_B,U_B)\right)
-\dive\left(K_A^{h}\nabla\theta_A\right)&=f_\theta^{h}-W_{\Gamma}^{h}\quad &&\text{in}\ \ S\times\Omega\label{h:hom_prob:2},\\
-\dive_y\left(\CC_B^r e_y(U_B)-\alpha_B^r\Theta_B\right)&=f_{u_B}^r\ \ &&\text{in}\ \ S\times\Omega\times Y_B\label{h:hom_prob:3},\\
\begin{split}
\partial_t\left(c_B^r\Theta_B+\gamma_B^r:\nabla_y U_B\right)-\dive_y\left(K_B^r\nabla_Y\Theta_B\right)\qquad\\-\dive_y\left(\left(c_B^r\Theta_B^r+\gamma_B^r:\nabla_y U_B\right)v^r\right)&=f_{\theta_B}^r\end{split}\ \ &&\text{in}\ \ S\times\Omega\times Y_B,\label{h:hom_prob:4}\\
U_B=u_A,\ \ \Theta_B&=\theta_A\quad &&\text{on}\ \ S\times\Omega\times\Gamma,\label{h:hom_prob:5}
\end{alignat}
\end{subequations}
again, complemented with corresponding initial and boundary values.

\section{Preliminaries}\label{sec:preliminaries}
In this section, we will lay the groundwork for the corrector estimations that are done in Section~\ref{sec:corrector} in stating the existence and regularity results for the solutions and also giving some auxiliary estimates.

We introduce the spaces
$$
H^1(\Omega_A^\e;\partial\Omega)=\left\{u\in H^1(\Omega_A^\e) \ : \ u=0 \ \text{on} \ \partial\Omega\right\}
$$
and, for a Banach space $X$,
$$
\mathcal{W}(S;X)=\left\{u\in L^2(S;X) \ : \ \text{such that} \ \partial_tu\in L^2(S;X')\right\}.
$$
In general, we will not differentiate (in the notation) between a function defined on $\Omega$ and its restriction to $\Omega_A^\e$ or $\Omega_B^\e$ or between a function defined on one of those subdomains and its trivial extension to the whole of $\Omega$.
Here, and in the following, $C$, $C_1$, $C_2$ denote generic constants which are independent of $\e$ but whose values might change even from line to line.

For a function $f=f(x,y)$, we introduce the so called macroscopic reconstruction $\recon{f}=\recon{f}(x)=f(x,x/\e)$.
Note that, for general $f\in L^\infty(\Omega;H^1(Y)$, $\recon{f}$ may not even be measurable (see~\cite{Al92}); continuity in one variable, e.g., $f\in L^\infty(\Omega;C_\#(Y))$ is sufficient, though.
Applying the chain rule leads to $D\recon{f}=\recon{D_xf}+1/\e\recon{D_yf}$, where $D=\nabla, e(\cdot),\dive(\cdot)$, for sufficiently smooth functions $f$.

\paragraph{\textbf{Assumptions on the data (A1)}}
We assume that $\theta_0^\e\in H^1(\Omega)$, $f_{u_i}^{r,\e}\in C^1(S;L^2(\Omega_i^\e))$, $f_{\theta_i}^{r,\e}\in L^2(S\times\Omega_i^\e)$.
Furthermore, let $\theta_{A_0}\in H^1(\Omega)$, $f_{u_A}^h\in C^1(S;L^2(\Omega))$, $f_{\theta_A}^h\in L^2(S\times\Omega)$ for the macroscopic homogenized part and $\theta_{B_0}\in C(\Omega;H^1(Y_B))$, $f_{u_B}^r\in C^1(S;L^2(\Omega))$, $f_{\theta_B}^r\in L^2(S\times\Omega)$ for the two-scale part.
Moreover, we expect the following convergence rates to hold for our data
\begin{align*}
\left\|\mathds{1}_{\Omega_A^\e}\theta_0^\e-\theta_{A_0}\right\|_{L^2(\Omega_A^\e)}&\leq C\sqrt{\e},\\
\left\|\mathds{1}_{\Omega_B^\e}\theta_0^\e-\recon{\theta_{B_0}}\right\|_{L^2(\Omega_B^\e)}+\left\|f_{u_B}^{r,\e}-\recon{f_{u_B}^r}\right\|_{L^2(\Omega_B^\e)^3}+\left\|f_{\theta_B}^{r,\e}-\recon{f_{\theta_B}^r}\right\|_{L^2(\Omega_B^\e)}&\leq C\e
\end{align*}
and, in addition, to have the following estimates
\begin{align*}
\int_{\Omega_A^\e}\left|(f_{u_A}^{r,\e}-f_{u_A}^h)\p(x)\right|\di{x}&\leq C\e\|\p\|_{H^1(\Omega_A^\e)}\quad\text{for all} \ \p\in H^1(\Omega_A^\e;\partial\Omega)^3,\\
\int_{\Omega_A^\e}\left|(f_{\theta_A}^{r,\e}-f_{\theta_A}^h)\p(x)\right|\di{x}&\leq C\e\|\p\|_{H^1(\Omega_A^\e)}\quad\text{for all} \ \p\in H^1(\Omega_A^\e;\partial\Omega).
\end{align*}

If we are also interested in developing estimates for the time derivatives, we need stronger regularity assumptions.
\paragraph{\textbf{Assumptions on the data (A2)}}
Additionally to Assumptions (A1), we also expect the following convergence rates to hold:
\begin{align*}
\left\|\mathds{1}_{\Omega_A^\e}\theta_0^\e-\theta_{A_0}\right\|_{H^1(\Omega_A^\e)}&\leq C\sqrt{\e},\\
\left\|\mathds{1}_{\Omega_B^\e}\theta_0^\e-\recon{\theta_{B_0}}\right\|_{H^1(\Omega_B^\e)}+\left\|\partial_t(f_{u_B}^{r,\e}-\recon{f_{u_B}^r})\right\|_{L^2(\Omega_B^\e)^3}+\left\|\partial_t(f_{\theta_B}^{r,\e}-\recon{f_{\theta_B}^r})\right\|_{L^2(\Omega_B^\e)}&\leq C\e.
\end{align*}
Moreover, we assume
\begin{align*}
\int_{\Omega_A^\e}\left|\partial_t(f_{u_A}^{r,\e}-f_{u_A}^h)\p(x)\right|\di{x}&\leq C\e\|\p\|_{H^1(\Omega_A^\e)}\quad\text{for all} \ \p\in H^1(\Omega_A^\e;\partial\Omega)^3,\\
\int_{\Omega_A^\e}\left|\partial_t(f_{\theta_A}^{r,\e}-f_{\theta_A}^h)\p(x)\right|\di{x}&\leq C\e\|\p\|_{H^1(\Omega_A^\e)}\quad\text{for all} \ \p\in H^1(\Omega_A^\e;\partial\Omega).
\end{align*}

\subsection{Regularity results}
To be able to justify the steps in the estimates in Section~\ref{sec:corrector}, some of the involved functions need to be of higher regularity than is guaranteed via the standard $H^1$-theory for elliptic/parabolic problems.
In the following lemmas, we will collect the appropriate regularity results.
\begin{lemma}[Regularity of Cell Problem Solutions]\label{reg_cell}
The solutions of the problems \eqref{cell:1}-\eqref{cell:3} possess the regularity $\tau_j^\theta\in C^2(\overline{S};C^1(\overline{\Omega};W^{2,p}(Y_A)))$ and $\tau^u_{jk}$, $\tau^u\in C^2(\overline{S};C^1(\overline{\Omega};W^{2,p}(Y_A)^3))$ for some $p>3$.
\end{lemma}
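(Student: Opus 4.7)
The plan is to treat the three cell problems \eqref{cell:1}--\eqref{cell:3} in two stages: first pin down the $y$-regularity at each fixed parameter value $(t,x)\in\overline{S}\times\overline{\Omega}$, then upgrade the parameter dependence to $C^2$ in time and $C^1$ in $x$ by differentiating the variational formulation. Throughout, I use that $\CC_A^r$, $K_A^r$, $\alpha_A^r$ are built from $s$ and its first spatial derivatives, so from assumption (1) on $s$ and the definitions they belong to $C^2(\overline{S};C^2(\overline{\Omega};C^1_\#(\overline{Y_A})))$ (tensor- or matrix-valued), and by the lower bound $\det(\nabla s)\geq c_s>0$ the transformed operators remain uniformly elliptic and the transformed elasticity tensor retains the symmetries needed for the Legendre--Hadamard / Korn estimates on $H^1_\#(Y_A)^3$.

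For the $y$-regularity at fixed $(t,x)$, I would apply the standard $L^p$ Calder\'on--Zygmund theory. The domain $Y_A$ has outer periodic boundary (no genuine boundary in the torus sense, thanks to condition (7) on $s$, which makes the coefficients $Y$-periodic and smooth across $\partial Y$) and an interior boundary $\Gamma$ that is $C^3$ and strictly separated from $\partial Y$. The natural boundary condition coming from \eqref{cell:1}--\eqref{cell:3} is a co-normal (Neumann-type) condition on $\Gamma$. With Lipschitz (here in fact $C^1$) coefficients and a $C^{1,1}$ interior boundary, the scalar theorem of Agmon--Douglis--Nirenberg (and its systems version for the Lam\'e-type operator in \eqref{cell:2} and \eqref{cell:3}) yields $\tau_j^\theta(t,x,\cdot), \tau_{jk}^u(t,x,\cdot), \tau^u(t,x,\cdot)\in W^{2,p}(Y_A)$ for every $p\in(1,\infty)$, hence in particular for some $p>3$, together with a uniform bound in terms of $\|\text{data}\|_{L^p(Y_A)}$. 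This is enough to also obtain, via Sobolev embedding, $W^{2,p}\hookrightarrow C^1(\overline{Y_A})$ for $p>3$.

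For the $(t,x)$-dependence, I would differentiate the weak formulations. Writing the bilinear form associated with, say, \eqref{cell:1} as $a_{(t,x)}(\tau_j^\theta,v)=-\int_{Y_A}K_A^r(t,x,\cdot)e_j\cdot\nabla_y v\di{y}$, the finite difference quotients $\Delta_h^t\tau_j^\theta$ satisfy a variational problem of the same structure with a right-hand side controlled by $\Delta_h^tK_A^r$ times $\nabla_y\tau_j^\theta+e_j$. Passing $h\to 0$ and using coercivity gives a weak formulation for $\partial_t\tau_j^\theta$, and the same $L^p$ regularity theorem as above, now applied to this derived problem with right-hand side already in $W^{1,p}\subset L^p$, yields $\partial_t\tau_j^\theta(t,x,\cdot)\in W^{2,p}(Y_A)$. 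Iterating once produces $\partial_{tt}\tau_j^\theta$; an identical argument in $x$ (where the $x$-dependence enters only parametrically through the coefficients) provides one spatial derivative in $W^{2,p}(Y_A)$. Continuity of the maps $(t,x)\mapsto\partial_t^\ell\partial_x^\beta\tau_j^\theta(t,x,\cdot)$ into $W^{2,p}(Y_A)$ follows from the continuous dependence of coefficients in $(t,x)$ and the a priori estimates. The vector-valued problems \eqref{cell:2}, \eqref{cell:3} are handled identically, the coupling with $\alpha_A^r$ in \eqref{cell:3} being harmless because it appears only as a smooth inhomogeneity.

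The step I expect to be the main obstacle is the $L^p$-regularity on the perforated periodic cell with co-normal condition on the curved interior interface $\Gamma$: one has to verify carefully that periodic boundary conditions on $\partial Y$ can be absorbed by transferring the problem to the torus (where the region $\{y\in Y:\dist(\partial Y,y)<c/2\}$ is trivial because $s$ is the identity there by condition (7)), so that $\Gamma$ becomes a genuine closed $C^3$ hypersurface sitting inside a smooth manifold without boundary, and then standard interior/boundary $W^{2,p}$ estimates with Neumann data apply. Once this framework is in place, all three subsequent regularity statements in $(t,x)$ reduce to a bootstrap of the same estimate.
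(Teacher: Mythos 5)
Your proposal is correct and follows essentially the same route as the paper: the paper's proof simply cites standard elliptic regularity theory (Gilbarg--Trudinger and the application in the cited phase-field reference) for the $y$-regularity and notes that the $(t,x)$-regularity is inherited from the $C^2$-smoothness of the coefficients, which is exactly the two-stage argument you spell out in detail. Your additional care about transferring the periodic conditions to the torus, the co-normal condition on the $C^3$ interface $\Gamma$, and the difference-quotient bootstrap in $(t,x)$ fills in precisely the steps the paper leaves to the references.
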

\begin{proof}
The regularity w.r.t~$y\in Y_A$ can be derived from standard elliptic regularity theory (we refer to~\cite{GT13} for the general results and~\cite{E04} for the application to our case of the cell problems).
The rest is a direct consequence of the regularity (w.r.t.~$t\in S$ and $x\in\Omega$) of the involved coefficients.
\end{proof}
Note that this implies, in particular, that the cell problem functions and their gradients (w.r.t.~$y\in Y$) are bounded and that their macroscopic reconstructions are well-defined measurable functions.

In the following, we denote $U^\e=(U_A^\e,U_B^\e)$ and $\Theta^\e=(\Theta_A^\e,\Theta_B^\e)$.

\begin{lemma}[Existence and Regularity Theorem for the $\e$-Problem]\label{reg_epsilon}
There is a unique $(U^\e,\Theta^\e)\in \mathcal{W}(S;H_0^1(\Omega)^3\times H_0^1(\Omega))$ solving the variational system~\eqref{p:ref} for which standard energy estimates hold independently of the parameter $\e$.
Furthermore, this solution possesses the regularity $(U^\e,\Theta^\e)\in C^1(S;H^2(\Omega_A^\e)^3\times H^2(\Omega_B^\e)^3)\times L^2(S;H^2(\Omega_A^\e)\times H^2(\Omega_B^\e))$ such that $\partial_t\Theta^\e\in L^2(S;H^1(\Omega))$.
\end{lemma}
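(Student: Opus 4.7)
The plan is to split the proof into three parts: existence and uniqueness of a weak solution in $\mathcal{W}(S;H_0^1(\Omega)^3\times H_0^1(\Omega))$, the $\e$-independent energy estimates, and finally the higher spatial and temporal regularity. For the first part I would follow the Galerkin strategy of \cite{EM17}, discretizing $U^\e$ and $\Theta^\e$ simultaneously in finite-dimensional subspaces of $H_0^1(\Omega)^3\times H_0^1(\Omega)$. Because the momentum equation is quasi-stationary, the Galerkin system becomes a DAE whose consistent initial data are obtained by solving a coupled elliptic problem at $t=0$; existence then follows from Carathéodory's theorem using the $C^1$-in-time regularity of the transformed coefficients (inherited from the regularity of $s^\e$). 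Uniqueness follows by testing the homogeneous system obtained from the difference of two solutions against $\partial_t U^\e$ and $\Theta^\e$ and applying Grönwall's inequality.

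For the $\e$-independent energy estimate, the key is to test \eqref{p:ref} against $(\partial_t U^\e,\Theta^\e)$, phase by phase, and to add all four equations. The thermoelastic cross-term $\int\alpha_i^{r,\e}\Theta_i^\e:\nabla\partial_t U_i^\e$ in the momentum equation and the dissipation term $\partial_t(\gamma_i^{r,\e}:\nabla U_i^\e)\Theta_i^\e$ in the heat equation cancel up to a term involving $\partial_t(\alpha_i^{r,\e},\gamma_i^{r,\e})$, which is uniformly bounded thanks to assumption (8) and the companion estimate for time derivatives of $s^\e$. The transmission conditions on $\Sigma^\e$ produce curvature and velocity contributions that are controlled by~\eqref{s:estimate_movement} together with the $\e$-scaled trace inequality $\e\|v\|_{L^2(\Gamma^\e)}^2\leq C(\|v\|_{L^2(\Omega_B^\e)}^2+\e^2\|\nabla v\|_{L^2(\Omega_B^\e)}^2)$. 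The $\e$- and $\e^2$-scalings of the $B$-phase coefficients are designed to balance precisely the $\mathcal{O}(\e^{-1})$ penalty coming from $|\Omega_B^\e|$ combined with the Poincaré inequality on the inclusions, so that Grönwall's inequality closes the estimate uniformly in $\e$.

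For the regularity, I would work on each phase separately. Since $s\in C^2(\overline{S};C^2(\overline{\Omega})\times C^2_\#(Y))$, the interfaces $\Gamma^\e(t)$ and their back-transformed counterparts are $C^2$, so that Agmon--Douglis--Nirenberg elliptic regularity applied to the Lamé-type momentum system on $\Omega_A^\e$ and $\Omega_B^\e$ yields $U^\e(t)\in H^2(\Omega_A^\e)^3\times H^2(\Omega_B^\e)^3$ for a.e.~$t$, with the source being $\Theta^\e(t)$ and the boundary data coming from the continuity condition on $\Gamma^\e$ (whose $H^{3/2}$-regularity is inherited from the $H^2$ regularity on the other phase by a fixed-point/alternating argument). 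For the parabolic part, standard Lions--Magenes theory applied after freezing $U^\e$ in the dissipation term gives $\Theta^\e\in L^2(S;H^2)$ and $\partial_t\Theta^\e\in L^2(S;H^1)$ provided one already knows $\partial_t\nabla U^\e\in L^2(S\times\Omega)$. The $C^1$-in-time regularity of $U^\e$ is finally obtained by formally differentiating the elliptic momentum system in time, using $\partial_t\Theta^\e\in L^2(S;H^1)$ as data, and invoking again elliptic regularity at a.e.~$t$.

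The main obstacle is precisely the circular dependence in the last paragraph: the regularity of $\partial_t\Theta^\e$ requires $\partial_t U^\e\in L^2(S;H^1)$, while $\partial_t U^\e\in C(S;H^1)$ presupposes $\partial_t\Theta^\e\in L^2(S;H^1)$. I would unwind this by a bootstrap that exploits the quasi-stationary character of the momentum equation: one first differentiates the mechanical equation in time formally to express $\partial_t U^\e$ as a (bounded) linear functional of $\partial_t\Theta^\e$ and $\partial_t f_u^{r,\e}$, substitutes into the time-differentiated heat equation, and then obtains a self-contained parabolic problem for $\partial_t\Theta^\e$ whose coercivity (uniform in $\e$) follows from a Korn-type estimate on $\Omega_A^\e\cup\Omega_B^\e$. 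Additional care is needed at $t=0$ to ensure compatibility of initial data with the regularity claimed, which uses the assumption $\theta_0^\e\in H^1(\Omega)$ from (A1) and the initial zero condition on $u^\e$.
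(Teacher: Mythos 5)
The paper does not prove this lemma from scratch: existence, uniqueness and the $\e$-uniform energy estimates are quoted directly from \cite[Theorems 3.7 and 3.8]{EM17}, and the higher regularity is obtained in one line by viewing \eqref{p:ref} as a linear transmission problem with smooth coefficients and $C^3$ interfaces and citing standard regularity theory for such problems. Your proposal is therefore not a different route so much as a reconstruction of what those citations contain, and in outline it is the right reconstruction: Galerkin discretization respecting the DAE structure of the quasi-stationary momentum equation, energy estimates from testing with $(\partial_t U^\e,\Theta^\e)$, and then elliptic/parabolic regularity on each phase with the quasi-stationary structure used to eliminate $\partial_t U^\e$ from the time-differentiated heat equation.

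Two concrete points deserve attention. First, the cancellation you invoke in the energy estimate is not automatic: testing the momentum equation with $\partial_t U_i^\e$ produces $-\int\alpha_i^{r,\e}\Theta_i^\e\,\dive(\partial_t U_i^\e)$, while the heat equation tested with $\Theta_i^\e$ produces $+\int\gamma_i^{r,\e}\dive(\partial_t U_i^\e)\,\Theta_i^\e$ plus the harmless $\partial_t\gamma$-term. These cancel only if $\alpha_i=\gamma_i$, which is not assumed; since both are positive scalars one must first renormalize the temperature (e.g.\ replace $\Theta_i^\e$ by a suitable multiple depending on $\gamma_i/\alpha_i$) to symmetrize the coupling, otherwise the leftover $(\gamma_i-\alpha_i)\int\Theta_i^\e\,\dive(\partial_t U_i^\e)$ contains an uncontrolled time derivative and Gr\"onwall does not close. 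Second, your alternating/fixed-point argument for passing $H^{3/2}$ interface data back and forth between the phases is not how transmission regularity is established (and need not converge under large coefficient contrast, in particular with the $\e^2$-degenerate $B$-phase); the standard route, which is what the references cited in the paper encapsulate, treats both jump conditions on $\Gamma^\e$ simultaneously after locally flattening the interface. Finally, be careful about uniformity: the bootstrap you describe does deliver the claimed regularity, but the resulting $H^2$ bounds are not independent of $\e$ --- the paper remarks on exactly this immediately after the lemma --- so only the first-order energy estimates should be asserted to be $\e$-uniform.
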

\begin{proof}
The proof of the existence of a unique solution and of the energy estimates is given in~\cite[Theorem 3.7, Theorem 3.8]{EM17}.
As a linear transmission problem (with sufficiently regular coefficients), regularity results apply (we refer to, e.g.,~\cite{E10}; see, also,~\cite{SM02} for a similar coupling problem).
\end{proof}

Since $s^\e$ is a diffeomorphism, this leads to a unique solution to the moving interface problem, also.
However, while the solution has $H^2$-regularity, its second derivatives are not necessarily bounded (and in general will not be) independently of $\e>0$.

\begin{lemma}[Existence and Regularity Theorem for the Homogenized Problem]\label{reg_homo}
There is a unique $$(u_A,\theta_A,U_B,\Theta_B)\in\mathcal{W}(S;H_0^1(\Omega)^3\times H_0^1(\Omega)\times L^2(\Omega;W^{1,2}(Y_B)^3)\times L^2(\Omega;W^{1,2}(Y_B)))$$ solving the variational system~\eqref{p:refhom}.
Furthermore, $(u_A,\theta_A)\in C^1(S;H^2(\Omega)^3)\times L^2(S;H^2(\Omega))$ such that $\partial_t\theta_A\in L^2(S;H^1(\Omega))$ and
$(U_B,\Theta_B)\in C^1(S;H^2(\Omega;H^2(Y_B)^3))\times L^2(S;H^2(\Omega;H^2(Y_B)))$ such that $\partial_t\Theta_B\in L^2(S;H^2(\Omega;H^1(Y_B)))$.
\end{lemma}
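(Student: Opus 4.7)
The plan is to first establish existence and uniqueness at the energy level, then bootstrap regularity by applying standard elliptic and parabolic theory to the decoupled macro and micro subsystems, and finally handle the scale coupling iteratively. For existence at the energy level, the most economical route is to invoke the homogenization theorem of~\cite{EM17}: the two-scale limits of the $\e$-problem solutions produced there satisfy the variational formulation associated with~\eqref{p:refhom}, and the uniform a priori bounds pass to the limit to give the required basic regularity. Alternatively, a Galerkin approximation using tensor product bases of $H^1_0(\Omega)$ and $L^2(\Omega; H^1(Y_B))$ works directly, with the Dirichlet constraint~\eqref{h:hom_prob:5} enforced by a lifting of the macroscopic trace. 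Uniqueness follows by subtracting two hypothetical solutions, testing with the differences themselves, invoking coercivity of $\CC_A^h, K_A^h, \CC_B^r, K_B^r$ (which transfer from the micro-level coercivity of $\CC_A, \CC_B, K_A, K_B$ through the smooth transformation $s$), and closing via Gronwall.

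For the macroscopic regularity upgrade, the elasticity equation~\eqref{h:hom_prob:1} is a linear Lamé system on $\Omega$ with constant coefficients, homogeneous Dirichlet data, and right-hand side $f_u^h + H_\Gamma^h + \dive(\alpha_A^h \theta_A) \in L^2(\Omega)^3$; classical elliptic regularity on $\Omega$ (whose piecewise-polyhedral structure is compatible with $H^2$ theory for Lamé systems) gives $u_A(t,\cdot) \in H^2(\Omega)^3$ pointwise in $t$. Differentiating the equation in time and using (A2) yields the $C^1$-in-$t$ control. Rewriting~\eqref{h:hom_prob:2} as a scalar parabolic equation in $\theta_A$ with source $f_\theta^h - W_\Gamma^h - \partial_t(\gamma_A^h\!:\!\nabla u_A + A^h(\Theta_B, U_B))$, maximal parabolic regularity then gives $\theta_A \in L^2(S; H^2(\Omega))$ and $\partial_t \theta_A \in L^2(S; H^1(\Omega))$ as soon as the source is known to lie in $L^2(S \times \Omega)$.

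For the microscopic variables, at each fixed $(t,x) \in S \times \Omega$, equations~\eqref{h:hom_prob:3}--\eqref{h:hom_prob:4} form a linear elliptic/parabolic system on the $C^3$-smooth inclusion $Y_B$ with Dirichlet data $u_A(t,x), \theta_A(t,x)$ on $\Gamma$ and $C^2$-smooth coefficients inherited from $s$. Lifting the boundary data and applying standard regularity theory in $y$ gives $U_B, \Theta_B \in H^2(Y_B)$ pointwise in $(t,x)$. To obtain $H^2$-regularity in $x$, I would differentiate the equations twice in $x$: the resulting systems for $\partial_{x_i x_j}^2 U_B, \partial_{x_i x_j}^2 \Theta_B$ carry additional sources generated by $x$-derivatives of the smooth coefficients and of the macroscopic Dirichlet data, all controlled by the regularity already established for $u_A, \theta_A$. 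Compatibility of the initial datum $\theta_{B_0} \in C(\Omega; H^1(Y_B))$ and the $C^2_\#$-smoothness of $s$ ensure that the bootstrap closes without loss.

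The main obstacle is the circular dependence between the scales: the macroscopic parabolic bound requires $\partial_t A^h(\Theta_B, U_B) \in L^2(S\times\Omega)$, while the micro regularity in $x$ requires $u_A, \theta_A \in H^2(\Omega)$. I would break this loop iteratively by first establishing coupled control at the energy level, then—using $u_A, \theta_A \in L^\infty(S; H^2(\Omega))$ as Dirichlet data—upgrading the micro variables to $L^\infty(S; H^2(\Omega; H^2(Y_B)))$, and finally returning to the macro temperature equation to close the argument. A parallel bookkeeping is needed for the time-differentiated equations, where the genuine $(t,x)$-dependence of $c_B^r, \gamma_B^r, v^r$ produces extra source terms that must be absorbed via (A2) and the uniform bound~\eqref{s:estimate_movement}; this is the most delicate point of the argument, since losing uniformity in either $t$ or $x$ at this stage would break the later corrector estimates.
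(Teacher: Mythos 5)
Your proposal follows essentially the same route as the paper: existence via the two-scale homogenization procedure of~\cite{EM17}, uniqueness by energy estimates for the linear coupled transmission problem, and higher regularity from the smoothness of domain, coefficients, and data via standard elliptic/parabolic theory (the paper simply cites~\cite{E10, SM02, VM11} for this last step). Your write-up is a much more detailed elaboration of that same strategy — including the cross-scale bootstrap the paper leaves implicit — rather than a different argument.
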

\begin{proof}
Existence of a solution is given via the two-scale homogenization procedure outlined in~\cite{EM17} and uniqueness for this linear coupled transmission problem can then be shown using energy estimates.
As to the higher regularity, this follows, again, via the regularity of domain, coefficients, and data, we refer to results outlined in~\cite{E10, SM02, VM11}.
\end{proof}
\subsection{Auxiliary estimates}

For the transformation related quantities, we have the following estimates available as stated in Lemma~\ref{lemma:estimates_trafo} and Lemma~\ref{lemma:average}.
\begin{lemma}\label{lemma:estimates_trafo}
There is a constant $C>0$ independent of the parameter $\e$ such that
\begin{multline*}
\|F^\e-\recon{F}\|_{L^\infty(S\times\Omega)^{3\times3}}+\|J^\e-\recon{J}\|_{L^\infty(S\times\Omega)}
+\|v^\e-\e\recon{v}\|_{L^\infty(S\times\Omega)}\\ +\|W_\Gamma^\e-\e\recon{W_\Gamma}\|_{L^\infty(S\times\Gamma)}+\|H_\Gamma^\e-\recon{H_\Gamma}\|_{L^\infty(S\times\Gamma)}
\leq C\e
\end{multline*}
The same estimates hold for the time derivatives of those functions.
\end{lemma}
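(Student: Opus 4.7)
The plan is to exploit the two-scale structure inherited from $s^\e$: each of the quantities $F^\e$, $J^\e$, $v^\e$, $W_\Gamma^\e$, $H_\Gamma^\e$ can be written as a composition $g(t, \e[x/\e], x/\e)$ of an $\e$-independent smooth function $g$, while its macroscopic reconstruction is the evaluation $g(t, x, x/\e)$ of the \emph{same} $g$. Since $|x - \e[x/\e]| = \e|\{x/\e\}| \leq \sqrt{3}\,\e$ pointwise, and $g$ will be Lipschitz in its slow variable $x$ by the assumed $C^2$-regularity of $s$, a single mean value argument should then yield the $O(\e)$ bound simultaneously for every term in the sum.

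Concretely, I would first use that $[x/\e]$ is piecewise constant (so $\nabla_x[x/\e] = 0$ a.e.\ and $\partial_t[x/\e] \equiv 0$) to derive the clean two-scale identities
\begin{align*}
F^\e(t,x) &= F(t, \e[x/\e], x/\e) \text{ with } F(t,x,y) := \nabla_y s(t,x,y),\\
J^\e(t,x) &= (\det F)(t, \e[x/\e], x/\e),\\
v^\e(t,x) &= \e\, v(t, \e[x/\e], x/\e) \text{ with } v(t,x,y) := \partial_t s(t,x,y),
\end{align*}
together with the analogous forms for $W_\Gamma^\e$ and $H_\Gamma^\e$, obtained by unrolling the chain rule for the compositions $n^\e \circ s^\e$ and for the divergence operator in the definition of $H_\Gamma^\e$. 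Simultaneously, these identifications pin down precisely which two-scale functions $W_\Gamma(t,x,y)$ and $H_\Gamma(t,x,y)$ serve as the natural definitions behind the reconstructions $\recon{W_\Gamma}$ and $\recon{H_\Gamma}$.

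The core estimate would then be a Lipschitz bound in the slow variable: for any $g \in C^0(\overline{S}; C^1(\overline{\Omega}; C_\#(Y)))$ one has, pointwise in $(t,x)$,
$$|g(t, \e[x/\e], x/\e) - g(t, x, x/\e)| \leq \|\nabla_x g\|_{L^\infty} \cdot |\e[x/\e] - x| \leq C\e.$$
Applied to the $g$'s identified above---whose slow gradients are uniformly bounded thanks to the $C^2$-regularity of $s$---this would immediately give all five $L^\infty$ estimates (indeed, for $v^\e$ and $W_\Gamma^\e$ the presence of an extra prefactor $\e$ even gives $O(\e^2)$). The time-derivative version follows by applying the very same inequality to $\partial_t g$, which is itself Lipschitz in $x$ because $s \in C^2(\overline{S}; C^2(\overline{\Omega}) \times C^2_\#(Y))$.

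The main obstacle I foresee is the bookkeeping for $H_\Gamma^\e$: its defining divergence, via the chain rule, generates terms of order $\e^{-1}$ (from differentiating the fast argument $x/\e$ inside $(F^\e)^{-1}$ and inside $n^\e \circ s^\e$) in addition to the $O(1)$ terms coming from $\partial_x s^\e = F^\e$. To secure the announced $C\e$ (rather than a merely $O(1)$) bound, I would need to check that these singular contributions reorganize themselves into exactly the same smooth two-scale function evaluated at $(t, \e[x/\e], x/\e)$ as the function used in $\recon{H_\Gamma}(t,x) = H_\Gamma(t, x, x/\e)$, so that the Lipschitz step applies to a single \emph{shared} coefficient function and the $\e^{-1}$ prefactor does not spoil the cancellation. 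Once this identification is made explicit, the mean value step closes the proof uniformly in all six terms.
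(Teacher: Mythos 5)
Your proposal is correct and follows essentially the same route as the paper: the authors also write each quantity as an $\e$-independent smooth function evaluated at $(t,\e[x/\e],x/\e)$, compare it with the same function at $(t,x,x/\e)$, and conclude by Lipschitz continuity in the slow variable together with $|x-\e[x/\e]|\leq C\e$ (the paper carries this out explicitly only for $F^\e$ and states that the remaining terms, including $H_\Gamma^\e$, follow in the same way). Your additional remarks on the $O(\e^2)$ gain for $v^\e$ and $W_\Gamma^\e$ and on the cancellation of the $\e^{-1}$ terms in $H_\Gamma^\e$ are consistent with, and slightly more detailed than, the paper's argument.
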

\begin{proof}
We show this, by way of example, only for $F^\e$, the other estimates follow in the same way:
\begin{align*}
\|F^\e-\recon{F}\|_{L^\infty(S\times\Omega)^{3\times3}}&=\esssup_{(t,x)\in S\times\Omega}\left|\nabla_y s\left(t,\e\left[\frac{x}{\e}\right],{\frac{x}{\e}}\right)-\nabla_y s\left(t,x,{\frac{x}{\e}}\right)\right\|\leq L\frac{\e}{\sqrt{2}}.
\end{align*}
Here, $L$ is the Lipschitz constant of $F^\e$ w.r.t.~$x\in\Omega$ (uniform in $S\times Y$).
\end{proof}
Based on these estimates and due to the fact that all material parameters are assumed to be constant in the moving geometry, we get the same estimates for the material parameters ($K^{r,e}$, $\alpha^{r,\e}$ and so on) in the reference configuration.

The following lemma is concerned with $\e$-independent estimates for the macroscopic reconstruction of periodic functions with zero average.
There are several different but similar theorems that can be found in the literature regarding corrector estimates in the context of homogenization, we refer to, e.g., \cite{CPS07, CP98, E04, MV13}, but for our purposes the following version suffices:
\begin{lemma}\label{lemma:average}
Let $f\in L^2(S\times\Omega_A^\e;C_\#(Y)$	such that
$$
\int_{Y_A}f(t,x,y)\di{y}=0\quad \text{a.e.~in}\ \ S\times\Omega_A^\e.
$$
Then, there is a constant $c>0$ such that, independently of $\e$,
$$
\int_{\Omega_A^\e}\left|\recon{f}(t,x)\p(x)\right|\di{x}\leq C\e\|\p\|_{H^1(\Omega_A^\e)}.
$$
for all $\p\in H^1(\Omega_A^\e;\partial\Omega)$.
\end{lemma}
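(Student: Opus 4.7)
The plan is to exploit the zero-$Y_A$-average of $f$ to represent $f$ as a $y$-divergence and then integrate by parts in a scale-aware fashion, the mismatch between the $x$- and $y$-scales producing the factor $\e$. I read the displayed bound in the customary form $\bigl|\int_{\Omega_A^\e}\recon{f}\,\p\,dx\bigr|\leq C\e\|\p\|_{H^1(\Omega_A^\e)}$, the standard form of such corrector estimates.

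First, for a.e.\ $(t,x)\in S\times\Omega_A^\e$, I would solve the periodic Neumann cell problem
$$
-\Delta_y w(t,x,y)=f(t,x,y)\ \text{in}\ Y_A,\qquad \nabla_y w\cdot n=0\ \text{on}\ \Gamma,\qquad w\ \text{$Y$-periodic},
$$
which is solvable in $H^1_\#(Y_A)/\R$ precisely because of the compatibility $\int_{Y_A}f(t,x,\cdot)\,dy=0$. Setting $G(t,x,y):=-\nabla_y w(t,x,y)$ yields a $Y$-periodic vector field with $\dive_y G=f$ in $Y_A$ and $G\cdot n=0$ on $\Gamma$. Elliptic regularity gives $\|G(t,x,\cdot)\|_{H^1(Y_A)}\leq c\|f(t,x,\cdot)\|_{L^2(Y_A)}$, and continuity of $G$ in $y$ (needed so that $\recon{G}$ is measurable) is inherited from $f\in C_\#(Y)$ via higher elliptic regularity.

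Next I apply the chain rule $\dive\recon{G}=\recon{\dive_x G}+\tfrac1\e\recon{\dive_y G}$ and rearrange to
$$
\recon{f}=\e\,\dive\recon{G}-\e\,\recon{\dive_x G}.
$$
Testing against $\p\in H^1(\Omega_A^\e;\partial\Omega)$ and integrating the first term by parts gives
$$
\int_{\Omega_A^\e}\recon{f}\,\p\,dx=-\e\int_{\Omega_A^\e}\recon{G}\cdot\nabla\p\,dx-\e\int_{\Omega_A^\e}\recon{\dive_x G}\,\p\,dx+\e\int_{\partial\Omega_A^\e}\bigl(\recon{G}\cdot\nu_{\Omega_A^\e}\bigr)\p\,ds.
$$
The boundary integral splits into a contribution on $\partial\Omega$, which vanishes by $\p=0$ there, and one on $\Gamma^\e$, which vanishes because the outward normal of $\Omega_A^\e$ on $\Gamma^\e$ is $-n^\e$ while $\recon{G}(x)\cdot n^\e(x)=G(t,x,x/\e)\cdot n(x/\e)=0$ by construction of $G$. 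Cauchy--Schwarz together with a cellwise Fubini-type estimate bounding $\|\recon{G}\|_{L^2(\Omega_A^\e)}+\|\recon{\dive_x G}\|_{L^2(\Omega_A^\e)}$ by $\|G\|_{L^2(\Omega;C_\#(Y_A))}+\|\dive_x G\|_{L^2(\Omega;C_\#(Y_A))}$, which in turn are controlled by $\|f\|_{L^2(S\times\Omega;C_\#(Y))}$ through the elliptic estimate on $G$, completes the argument.

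The delicate point, which I expect to be the main obstacle, is controlling the second term $\|\recon{\dive_x G}\|_{L^2(\Omega_A^\e)}$: this needs $x$-regularity of $f$, which the bare hypothesis $f\in L^2(S\times\Omega_A^\e;C_\#(Y))$ does not supply on its own. In every application of this lemma in Section~\ref{sec:corrector}, however, the relevant $f$ is assembled from cell-problem solutions (which by Lemma~\ref{reg_cell} lie in $C^1(\overline\Omega;W^{2,p}(Y_A))$) and from data meeting assumptions (A1)--(A2), so the missing $x$-smoothness is implicitly available wherever the lemma is actually invoked, and the $\recon{\dive_x G}$ contribution is safely controlled by these ambient regularity reserves.
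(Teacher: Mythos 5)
The paper offers no proof of its own here --- it simply defers to \cite[Lemma 3]{CP98} and \cite[Lemma 5.2]{MV13} --- and your argument (representing $f$ as $\dive_y G$ via an auxiliary periodic Neumann cell problem on $Y_A$ with $G\cdot n=0$ on $\Gamma$, extracting the factor $\e$ from the chain rule, and integrating by parts with the boundary terms killed by $\p=0$ on $\partial\Omega$ and $\recon{G}\cdot n^\e=0$ on $\Gamma^\e$) is precisely the standard mechanism behind those cited lemmas, so this is essentially the same approach. You also correctly identify the two defects of the statement as printed: the absolute value inside the integral cannot be meant literally (the bound would then fail already for $\p$ a fixed bump unless $f\equiv0$), and the bare hypothesis $f\in L^2(S\times\Omega_A^\e;C_\#(Y))$ supplies no $x$-regularity, so the $\recon{\dive_x G}$ remainder is only controllable because every $f$ to which the lemma is applied in Section~\ref{sec:corrector} is built from the $C^1(\overline{\Omega};W^{2,p}(Y_A))$ cell-problem solutions of Lemma~\ref{reg_cell} and the data of (A1)--(A2).
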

\begin{proof}
This can be proven similarly to the corresponding statements in~\cite[Lemma 3]{CP98} and \cite[Lemma 5.2]{MV13}.
\end{proof}

\section{Corrector estimates}\label{sec:corrector}
In this section, we are concerned with the actual corrector estimates.
%
%
%
%
Reconstructing micro-solutions from the homogenized functions via the $\recon{\cdot}$-operation and subtracting the heat equations~\eqref{p:full_problem_moving:1},~\eqref{p:full_problem_moving:2},~\eqref{h:hom_prob:2}, and~\eqref{h:hom_prob:4} and momentum equations~\eqref{p:full_problem_moving:1},~\eqref{p:full_problem_moving:1},~\eqref{h:hom_prob:1}, and~\eqref{h:hom_prob:3} we get
\begin{subequations}\label{corrector_eq}
\begin{alignat}{2}
\partial_t\left(c_A^{r,\e}\Theta_A^\e-\kappa_Ac^{h}\theta_A\right)+\partial_t\left(\gamma_A^{r,\e}:\nabla U_A^\e-\kappa_A\gamma_A^{h}:\nabla u_A\right)\qquad&\notag\\
	-\dive\left(K_A^{r,\e}\nabla\Theta_A^\e-\kappa_AK_A^{h}\nabla\theta_A\right)-\kappa_AW_{\Gamma}^{h}&=f_{\theta_A}^{r,\e}-\kappa_Af_\theta^{h},\label{corrector_eq:ha}\\
	-\dive\left(\CC_A^{r,\e}(U_A^\e)-\kappa_A\CC_A^{h}e(u_A)-\alpha_A^{r,\e}\Theta_A^\e+\kappa_A\alpha_A^{h}\theta_A\right)+H_\Gamma^{h}&=f_{u_A}^{r,\e}-\kappa_Af_u^{h},\label{corrector_eq:ma}\\[0.5cm]
	\partial_t\left(c_B^{r,\e}\Theta_B^\e-\recon{c_B^{r}\Theta_B}\right)+\partial_t\left(\e\gamma_B^{r,\e}:\nabla U_B^\e-\recon{\gamma_B^{r}:\nabla_Y U_B}\right)\qquad\notag\\
	-\dive\left(\e^2K_B^{r,\e}\nabla\Theta_B^\e\right)+\recon{\dive_Y\left(K_B^{r}\nabla_Y\Theta_B\right)}&=f_{\theta_B}^{r,\e}-\left[f_{\theta_B}^{r}\right]_\e,\label{corrector_eq:hb}\\
	-\dive\left(\e^2\CC_B^{r,\e} e(U_B^\e)-\e\alpha_B^{r,\e}\Theta_B^\e\right)+\recon{\dive_Y\left(\CC_B^{r} e_y(U_B)-\alpha_B^{r}\Theta_B\mathds{I}_3\right)}&=f_{u_B}^{r,\e}-\recon{f_{u_B}^{r}}\label{corrector_eq:mb}.
\end{alignat}
\end{subequations}
These equations hold in $\Omega_A^\e$ and $\Omega_B^\e$, respectively.
Using the interface and boundary conditions for both the $\e$-problem and the homogenized problem and then doing an integration by parts, these equations correspond to a variational problem in $H^{-1}(\Omega)$.

Our strategy in establishing the estimates is as follows: After introducing error and corrector functions and doing some further preliminary estimates, we first, in Section~\ref{subsec:momentum}, concentrate on the momentum part, i.e., equations~\eqref{corrector_eq:ma} and~\eqref{corrector_eq:mb}.
Here, we take the different terms arising in the weak formulation and estimate them individually using the results from Section~\ref{sec:preliminaries} and usual energy estimation techniques.
Combining those estimates, it is shown that the \emph{mechanical error} can be controlled by the \emph{heat error}, see Remark~\ref{rem:mech}. 
Then, in Section~\ref{subsec:heat}, we basically do the same for the heat conduction part, i.e., equations~\eqref{corrector_eq:ha} and~\eqref{corrector_eq:hb}, thereby arriving at the corresponding result that the \emph{heat error} is controlled by the \emph{mechanical error} (and the time derivative of the \emph{mechanical error}), see Remark~\ref{rem:heat}.

Finally, in Section~\ref{subsec:overall}, we go about combining those individual estimates.
Here, we show that for the scenarios $(a)$ (Theorem~\ref{cor1}) and $(b)$ (Theorem~\ref{cor2}) (as described in the introduction), we get the desired estimates, i.e., equation~\eqref{goal_est}.
Moreover, we point out why the same strategy will not work for the full problem.

Now, we introduce the functions\footnote{The subscripts ``$\err$'' and ``$\cor$'' for \emph{error} and \emph{corrector}, respectively.}
\begin{alignat*}{2}
U_\err^\e&=\begin{cases}U_A^\e-u_A & \text{in}\ S\times\Omega_A^\e\\ U_B^\e-\recon{U_B} & \text{in}\ S\times\Omega_B^\e\end{cases},\quad&
\Theta_\err^\e&=\begin{cases}\Theta_A^\e-\theta_A & \text{in}\ S\times\Omega_A^\e\\ \Theta_B^\e-\recon{\Theta_B} & \text{in}\ S\times\Omega_B^\e\end{cases},\\
U_\cor^\e&=\begin{cases}U_\err^\e-\e\recon{\widetilde{U}}& \text{in}\ S\times\Omega_A^\e\\ U_\err^\e & \text{in}\ S\times\Omega_B^\e\end{cases},\quad&
\Theta_\cor^\e&=\begin{cases}\Theta_\err^\e-\e\recon{\widetilde{\Theta}} & \text{in}\ S\times\Omega_A^\e\\ \Theta_\err^\e & \text{in}\ S\times\Omega_B^\e\end{cases}.
\end{alignat*}
The functions $\widetilde{U}$ and $\widetilde{\theta}$ are exactly the functions arising in the two-scale limits of the gradients of $U_A^\e$ and $\Theta_A^\e$, respectively, and are given as (cf.~\cite{EM17})
\begin{align*}
\widetilde{U}=\sum_{j,k=1}^3\tau_{jk}^ue(u_A)_{jk}+\tau^u\theta_A,\qquad
\widetilde{\Theta}=\sum_{j=1}^3\tau^\theta\partial_j\theta_A.
\end{align*}
Due to the corrector part (namely, $\e\recon{\widetilde{\Theta}}$ and $\e\recon{\widetilde{U}}$, respectively), our corrector functions will, in general, not vanish at $\partial\Omega$ and are therefore not valid choices of test functions for a weak variational formulation of the system given via equations~\eqref{corrector_eq:ha}-\eqref{corrector_eq:mb}.
Because of that, we introduce a smooth cut-off function $m^\e\colon\Omega\to[0,1]$ such that $m^\e(x)=0$ for all $x\in\Omega_A^\e$ such that $\dist(x,\partial\Omega)\leq\e c/2$ and $m^\e(x)=1$ for all $x\in\Omega_A^\e$ such that $\dist(x,\partial\Omega)\geq\e c$. 
Furthermore, we require the estimate
\begin{align}\label{estimate_cut}
\sqrt{\e}\left\|\nabla m^\e\right\|_{L^2(\Omega)}+\sqrt{\e^3}\left\|\Delta m^\e\right\|_{L^2(\Omega)}+\frac{1}{\sqrt{\e}}\left\|1-m^\e\right\|_{L^2(\Omega)}\leq C
\end{align}
to hold independently of the parameter $\e$.
For this cut-off function $m^\e$, we set
\begin{align*}
U_{\cor0}^\e&=\begin{cases}U_\cor^\e+(1-m^\e)\e\recon{\widetilde{U}} & \text{in}\ S\times\Omega_A^\e\\ U_\cor^\e-\e\recon{\widetilde{U}} & \text{in}\ S\times\Omega_B^\e\end{cases},\quad&
\Theta_{\cor0}^\e&=\begin{cases}\Theta_\cor^\e+(1-m^\e)\e\recon{\widetilde{\Theta}}& \text{in}\ S\times\Omega_A^\e\\\Theta_\cor^\e-\e\recon{\widetilde{\Theta}} & \text{in}\ S\times\Omega_B^\e\end{cases}.
\end{align*}
Obviously, we then have $\Theta_{\cor0}^\e\in H_0^1(\Omega)$ and $U_{\cor0}^\e\in H_0^1(\Omega)^3$.
Owing to the regularity of $\widetilde{U}$ and $\widetilde{\Theta}$ (Lemma~\ref{reg_cell}) and the estimate~\eqref{estimate_cut} for $m^\e$, these modified correctors admit the following $\e$-independent estimates (for $i=A,B$)\footnote{The same estimates hold when replacing the linearized strain tensor with the gradient operator.}
\begin{align*}
\left\|U_{\err}^\e\right\|_{L^2(\Omega_i^\e)^3}&\leq\left\|U_{\cor0}^\e\right\|_{L^2(\Omega_i^\e)^3}+C\e\leq\left\|U_{\err}^\e\right\|_{L^2(\Omega_i^\e)^3}+2C\e,\\
\left\|e(U_{\cor}^\e)\right\|_{L^2(\Omega_A^\e)^{3\times3}}&\leq\left\|e( U_{\cor0}^\e)\right\|_{L^2(\Omega_A^\e)^{3\times3}}+C(\sqrt{\e}+\e)\leq\left\|e(U_{\cor}^\e)\right\|_{L^2(\Omega_A^\e)^{3\times3}}+2C(\sqrt{\e}+\e),\\
\e\left\|e( U_{\cor}^\e)\right\|_{L^2(\Omega_B^\e)^{3\times3}}&\leq\e\left\|e(U_{\cor0}^\e)\right\|_{L^2(\Omega_B^\e)^{3\times3}}+C(\sqrt{\e}+\e)\leq\e\left\|e(U_{\cor}^\e)\right\|_{L^2(\Omega_B^\e)^{3\times3}}+2C(\sqrt{\e}+\e),\\[0.3cm]
\left\|\Theta_{\err}^\e\right\|_{L^2(\Omega_i^\e)}&\leq\left\|\Theta_{\cor0}^\e\right\|_{L^2(\Omega_i^\e)}+C\e\leq\left\|\Theta_{\err}^\e\right\|_{L^2(\Omega_i^\e)}+2C\e,\\
\left\|\nabla \Theta_{\cor}^\e\right\|_{L^2(\Omega_A^\e)^{3}}&\leq\left\|\nabla \Theta_{\cor0}^\e\right\|_{L^2(\Omega_A^\e)^{3}}+C(\sqrt{\e}+\e)\leq\left\|\nabla \Theta_{\cor}^\e\right\|_{L^2(\Omega_A^\e)^{3}}+2C(\sqrt{\e}+\e),\\
\e\left\|\nabla \Theta_{\cor}^\e\right\|_{L^2(\Omega_B^\e)^{3}}&\leq\e\left\|\nabla \Theta_{\cor0}^\e\right\|_{L^2(\Omega_B^\e)^{3}}+C(\sqrt{\e}+\e)\leq\e\left\|\nabla \Theta_{\cor}^\e\right\|_{L^2(\Omega_B^\e)^{3}}+2C(\sqrt{\e}+\e).
\end{align*}
%
Applying Korns inequality to $U_{\cor0}$ (see~\cite[Lemma 3.1]{EM17}) and using the above estimates, we then get
\begin{multline}\label{estimate_ucor}
\left\|U_\err^\e\right\|_{L^2(\Omega)}+\left\|\nabla U_{\cor}^\e\right\|_{L^2(\Omega_A^\e)^3}+\e\left\|\nabla U_{\cor}^\e\right\|_{L^2(\Omega_B^\e)^{3\times3}}\\ \leq
\left\|e(U_{\cor}^\e)\right\|_{L^2(\Omega_A^\e)^3}+\e\left\|e(U_{\cor}^\e)\right\|_{L^2(\Omega_B^\e)^{3\times3}}+C(\sqrt{\e}+\e).
\end{multline}
\subsection{Estimates for the momentum equations}\label{subsec:momentum}
Let us first concentrate on the mechanical part of the corrector equations, namely equations~\eqref{corrector_eq:ma} and~\eqref{corrector_eq:mb}.
After integrating over $\Omega$, multiplying with a test function $\p\in H_0^1(\Omega)^3$, and integrating by parts, we get
\begin{multline*}
\underbrace{\int_{\Omega_A^\e}\left(\CC_A^{r,\e} e(U_A^\e)-\kappa_A\CC_A^{h}e(u_A)\right):e(\p)\di{x}-\int_{\Gamma^\e}\kappa_A\CC_A^{h}e(u_A)n^\e\cdot\p\di{s}}_{=:I_1^\e(t,\p)}\\
+\underbrace{\e^2\int_{\Omega_B^\e}\left(\CC_B^{r,\e} e(U_B^\e)-\recon{\CC_B^{r}}e(\recon{U_B})\right):e(\p)\di{x}}_{=:I_2^\e(t,\p)}\\
\underbrace{-\int_{\Omega_A^\e}\left(\alpha_A^{r,\e}\Theta_A^\e-\kappa_A\alpha_A^{h}\theta_A\right):\nabla\p\di{x}+\int_{\Gamma^\e}\kappa_A\alpha_A^{h}\theta_An^\e\cdot\p\di{s}}_{=:I_3^\e(t,\p)}\\\
+\underbrace{\e\int_{\Omega_B^\e}\left(\alpha_B^{r,\e}\Theta_B^\e-\recon{\alpha_B^{r}\Theta_B}\right):\nabla\p\di{x}}_{=:I_4^\e(t,\p)}\\
=\underbrace{\int_{\Omega_A^\e}\left(f_{u_A}^{r,\e}-\kappa_A\int_{Y_A}f_{u_A}^{r}\di{y}\right)\cdot\p\di{x}}_{=:I_5^\e(t,\p)}
+\underbrace{\e^2\int_{\Gamma^\e}\recon{\CC_B^r}e(\recon{U_B})n^\e\cdot\p\di{s}-\int_{\Omega_A^\e}\kappa_A\int_{Y_B}f_{u_B}^{r}\di{y}\cdot\p\di{x}}_{=:I_6^\e(t,\p)}\\
+\underbrace{\e^2\int_{\Gamma^\e}H_{\Gamma}^{r,\e}n^\e\cdot\p\di{s}-\int_{\Omega_A^\e}\kappa_AH_\Gamma^{h}\cdot\p\di{x}}_{=:I_7^\e(t,\p)}
+\underbrace{\int_{\Omega_B^\e}\left(f_{u_B}^{r,\e}-\recon{f_{u_B}^{r}}\right)\cdot\p\di{x}}_{=:I_8^\e(t,\p)}
+I_9^\e(t,\p),
\end{multline*}
where 
$$
I_9^\e(t,\p):=\int_{\Omega_B^\e}\left(\e^2\dive\left(\recon{\CC_B^{r}e_x(U_B)}\right)+\e\recon{\dive_x\left(\CC_B^{r}e_y(U_B)}+\e\recon{\dive_x\left(\alpha_B^{r}\Theta_B\right)}\right)\right)\cdot\p\di{x}.
$$
We now go on estimating these terms individually and then combine the resulting estimates. 
We proceed for both $U_{\cor0}^\e$ and $\partial_tU_{\cor0}$ as test function choices.
While the first choice is the natural one for energy estimates, the second choice is needed in order to merge those estimates with the heat equation estimates.

Taking a look at $I_1^\e$, we calculate
\begin{multline*}
	I_1^\e(t,\p)=\int_{\Omega_A^\e}\CC_A^{r,\e} e(U_{\cor}^\e):e(\p)\di{x}\\ +\int_{\Omega_A^\e}\left(\CC_A^{r,\e} e(U_A^\e-U_{\cor}^\e)-\kappa_A\CC_A^{h}e(u_A)\right):e(\p)\di{x}-\int_{\Gamma^\e}\kappa_A\CC_A^{h}e(u_A)n^\e\cdot\p\di{s}
\end{multline*}
We also see (via the definition of $\CC$ and $\widetilde{U}$)
\begin{multline*}
\CC_A^{r,\e}e(U_A^\e-U_{\cor}^\e)
=\recon{\CC}e(u_A)+\recon{\CC_A^{r}e_y(\tau^u)}\theta_A\\
+\underbrace{\left(\CC_A^{r,\e}-\recon{\CC_A^{r}}\right)\left(e(u_A)+\recon{e_y\left(\widetilde{U}\right)}\right)+\e\CC_A^{r,\e}\recon{e_x\left(\widetilde{U}\right)}}_{R_1^\e},
\end{multline*}
where, using Lemma~\ref{lemma:average}, the estimate
$$
\left|\int_{\Omega_A^\e}R_1^\e\p\dx\right|\leq C\e\|p\|_{H^1(\Omega_A^\e)}
$$
holds.
Now, seeing that $\int_\Gamma \CC e(u_A)n\ds=0$ and $\dive_y(\CC e(u_A))=0$ a.e.~in $S\times\Omega$ and using Lemma~\ref{lemma:average}, we get
\begin{multline*}
\left|\int_{\Omega_A^\e}\left(\recon{\CC}-\kappa_A(0)\CC_A^{h}\right)e(U_A):e(\p)\di{x}
	-\int_{\Gamma^\e}\kappa_A(0)\CC e(u_A) n^\e\cdot\p\di{x}\right|\\
	=\left|\int_{\Omega_A^\e}\dive\left(\left(\recon{\CC}-\kappa_A(0)\CC_A^{h}\right)e(U_A)\right)\cdot\p\di{x}\right|
	\leq C\e\|\p\|_{H^1(\Omega_A^\e)}
\end{multline*}
for all $\p\in H^1(\Omega_A^\e;\partial\Omega)$.
Also,
\begin{align*}
\int_{\Omega_A^\e}\CC_A^{r,\e} e(U_{\cor}^\e):e(U_{\cor0}^\e)\di{x}&\geq C_1\|e(U_{\cor}^\e)\|_{L^2(\Omega_A^\e)^{3\times3}}^2
+\e\int_{\Omega_A^\e}\CC_A^{r,\e}e(U_\cor^\e):e\left((1-m^\e)\recon{\widetilde{U}}\right)\di{x}\\
&\geq \frac{C_1}{2}\|e(U_{\cor}^\e)\|_{L^2(\Omega_A^\e)^{3\times3}}^2-C_2(\e+\e^2).
\end{align*}
Here, the second inequality can be shown using the assumptions on $m^\e$ and the known estimates of the involved functions.
In summary, for $I_1^\e$, we arrive at
\begin{equation}\label{estimate_i1}
I_1^\e(t,U_{\cor0}^\e)-\int_{\Omega_A^\e}\recon{\CC_A^{r}e_y(\tau^u)}\theta_A:e(U_{\cor0}^\e)\di{x}\geq C_1\|e(U_{\cor}^\e)\|^2_{L^2(\Omega_A^\e)^{3\times3}}-C_2\left(\e+\e^2\right).
\end{equation}
Now, going on with $I_2^\e$, it is easy to see that
\begin{align}\label{estimate_i2}
I_2^\e(t,U_{\cor0}^\e)&=\e^2\int_{\Omega_B^\e}\recon{\CC_B^{r}}\left(e(U_B^\e)-e(\recon{U_B})\right):e(U_\cor)\di{x}\notag\\
&\hspace{3cm}+\e^2\int_{\Omega_B^\e}\left(\CC_B^{r,\e}-\recon{\CC_B^{r}}\right)e(U_B^\e):e(U_\cor)\di{x}\notag\\
&\geq C_1\e^2\left\|e(U_\cor^\e)\right\|_{L^2(\Omega_A^\e)^{3\times3}}^2-C_2\e^2.
\end{align}
Going forward with term $I_3^\e$, we decompose
\begin{equation}\label{decompose}
\alpha_A^{r,\e}\Theta_A^\e-\kappa_A\alpha_A^{h}\theta_A=\left(\alpha_A^{r,\e}-\recon{\alpha_A^{r}}\right)\Theta_A^\e+\recon{\alpha_A^{r}}\Theta_\err^\e+\left(\recon{\alpha_A^{r}}-\kappa_A\alpha_A^{h}\right)\theta_A
\end{equation}
from which we can estimate the first two terms as
\begin{align*}
\left|\int_{\Omega_A^\e}\left(\left(\alpha_A^{r,\e}-\recon{\alpha_A^{r}}\right)\Theta_A^\e+\recon{\alpha_A^{r}}\Theta_\err^\e\right):\nabla\p\right|\leq C\left(\e+\|\Theta_\err^\e\|_{L^2(\Omega_A^\e)}\right)\|\nabla\p\|_{L^2(\Omega_A^\e)}.
\end{align*}
For the remaining term of equation~\eqref{decompose} combined with the interface integral part of $I_3^\e$, we get
\begin{multline*}
\int_{\Omega_A^\e}\left(\recon{\alpha_A^{r}}-\kappa_A\alpha_A^{h}\right)\theta_A:\nabla\p\di{x}+\int_{\Gamma^\e}\kappa_A\alpha_A^{h}\theta_An^\e\cdot\p\di{s}\\
=-\int_{\Omega_A^\e}\recon{\dive_x\left(\left(\alpha_A^{r}-\kappa_A\int_{Y_A}\alpha_A^{r}\di{y}\right)\theta_A\right)}\cdot\p\di{x}
-\int_{\Omega_A^\e}\kappa_A\dive\left(\int_{Y_A}C_A^re_y(\tau^u)\di{y}\,\theta_A\right)\cdot\p\di{x}\\
-\int_{\Gamma^\e}\recon{\alpha_A^{r}}\theta_An^\e\cdot\p\di{s}
-\frac{1}{\e}\int_{\Omega_A^\e}\recon{\dive_y\left(\alpha_A^{r}\theta_A\right)}\cdot\p\di{x}
\end{multline*}
We apply Lemma~\ref{lemma:average} to
\begin{align*}
f_1&=\dive_x\left(\left(\alpha_A^{r}-\kappa_A\int_{Y_A}\alpha_A^{r}\di{y}\right)\theta_A\right),\\
f_2&=\dive_x\left(\left(\CC_A^{r}e_y(\tau^u))-\kappa_A\int_{Y_A}C_A^re_y(\tau^u)\di{y}\right)\theta_A\right)
\end{align*}
and recall that $\tau^u$ is a solution of the cell problem~\eqref{cell:3} (because of this the ``$\frac{1}{\e}\recon{\dive_y}$''-terms vanish) which leads to
\begin{equation}\label{estimate_i3}
\left|I_3^\e(t,\p)+\int_{\Omega_A^\e}\recon{\CC_A^{r}e_y(\tau^u)}\theta_A:e(\p)\di{x}\right|\leq
C\left(\e+\|\Theta_\err^\e\|_{L^2(\Omega_A^\e)}\right)\|\p\|_{H^1(\Omega_A^\e)}.
\end{equation}
Similarly as with $I_2^\e$, for the thermo-elasticity term $I_4^\e$, we get
\begin{align}\label{estimate_i4}
\left|I_4^\e(t,\p)\right|\leq C\|\Theta_\err^\e\|^2_{L^2(\Omega_B^\e)}+\e^2\|\nabla\p\|^2_{L^2(\Omega_B^\e)^{3\times3}}.
\end{align}
Next, we take the mean curvature error term $I_7^\e$
\begin{align*}
\left|I_7^\e(t,\p)\right|\leq\e^2\left|\int_{\Gamma^\e}\left(H_{\Gamma}^{r,\e}-\recon{H_{\Gamma}^r}\right)n^\e\cdot\p\di{s}\right|+\left|\e^2\int_{\Gamma^\e}\recon{H_{\Gamma}^r}n^\e\cdot\p\di{s}-\int_{\Omega_A^\e}\kappa_AH_\Gamma^{h}\cdot\p\di{x}\right|.
\end{align*}
Now, in view of the assumptions on our data and for the source density errors (stated in Assumptions (A1)) and the curvature estimate, using Lemma~\ref{lemma:estimates_trafo} (for the functional $I_6^\e(t,\p)$), and the boundedness of the functions involved in $I_9^\e$, we estimate
\begin{align}\label{estimate_i5}
\left|I_5^\e(t,\p)\right|+|I_6^\e(t,\p)|+\left|I_7^\e(t,\p)\right|+\left|I_8^\e(t,\p)\right|+\left|I_9^\e(t,\p)\right|\leq C\e\left(\|\p\|_{H^1(\Omega_A^\e)}+\|\p\|_{L^2(\Omega_B^\e}\right).
\end{align}
Finally, merging the individual estimates for the error terms (namely, estimates~\eqref{estimate_i1},\eqref{estimate_i2},\eqref{estimate_i3},\eqref{estimate_i4},\eqref{estimate_i5}) and the estimate~\eqref{estimate_ucor}, we conlude
\begin{align}\label{estimate_mech}
\|U_{\err}^\e\|_{L^2(\Omega)^{3}}+\|\nabla U_{\cor}^\e\|_{L^2(\Omega_A^\e)^{3\times3}}+\e\left\|\nabla U_\cor^\e\right\|_{L^2(\Omega_A^\e)^{3\times3}}\leq C\left(\sqrt{\e}+\e+\left\|\Theta_{\err}\right\|_{L^2(\Omega)}\right).
\end{align}
\begin{remark}\label{rem:mech}
With estimate~\eqref{estimate_mech} it is clear that the error in the mechanical part inherits the convergence rate from the heat-error (at least if it is not faster than $\sqrt{\e}$).
\end{remark}
If, additionally, Assumptions (A2) are fulfilled, it is also possible to first differentiate equations~\eqref{corrector_eq:ma} and~\eqref{corrector_eq:mb} with respect to time and to then choose the test function $\partial_tU_\cor$ for the variational formulation of the arising system.
This way, we can get the following estimate 
\begin{multline}\label{estimate_mechtime1}
\|\partial_tU_{\err}^\e\|_{L^2(\Omega)^{3}}+\|\nabla\partial_t U_{\cor}^\e\|_{L^2(\Omega_A^\e)^{3\times3}}+\e\left\|\nabla\partial_t U_\cor^\e\right\|_{L^2(\Omega_A^\e)^{3\times3}}\\
\leq C\left(\sqrt{\e}+\e+\left\|\Theta_{\err}\right\|_{L^2(\Omega)}+\left\|\partial_t\Theta_{\err}\right\|_{L^2(\Omega)}\right).
\end{multline}
Since this is done quite analogously to the estimates leading to inequality~\eqref{estimate_mech}, except for a few terms arising due to the time differentiation, the details of this are omitted.

If we take $\partial_t U_{\cor^\e}$ as a test function and follow the same strategy as in~\eqref{estimate_i1}, we can estimate
\begin{multline}\label{estimate_i1time}
\int_0^tI_1^\e(\tau,\partial_tU_{\cor0}^\e)\di{\tau}-\int_0^t\int_{\Omega_A^\e}\recon{\CC_A^{r}e_y(\tau^u)}\theta_A:e(\partial_tU_{\cor0}^\e)\di{x}\di{\tau}\\
\geq C_1\left(\left\|e(U_\cor^\e)(t)\right\|^2_{L^2(\Omega_A^\e)}-\left\|e(U_\cor^\e)(0)\right\|^2_{L^2(\Omega_A^\e)}\right)-C_2\int_0^t\left\|U_{\cor}^\e\right\|^2_{H^1(\Omega_A^\e)^3}\di{\tau}-C_3(\e+\e^2),
\end{multline}
where some integration by parts w.r.t.~time was done. 
Similarly, we obtain
\begin{multline}\label{estimate_i2time}
\int_0^tI_2^\e(\tau,\partial_tU_{\cor0}^\e)\di{\tau}
\geq C_1\e^2\left(\left\|e(U_\cor^\e)(t)\right\|^2_{L^2(\Omega_B^\e)^{3\times3}}-\left\|e(U_\cor^\e)(0)\right\|^2_{L^2(\Omega_B^\e)^{3\times3}}\right)\\-C_2\e^2\int_0^t\left\|e(U_{\cor}^\e)\right\|^2_{L^2(\Omega_B^\e)^{3\times3}}\di{\tau}-C_3\e^2.
\end{multline}
Moreover, it holds
\begin{align}\label{estimate_i5time}
\sum_{i=5}^9\left|I_i^\e(t,\partial_tU_{\cor0}^\e)\right|\leq C\e\left(\|U_\cor^\e\|_{H^1(\Omega_A^\e)^3}+\|U_\cor^\e\|_{L^2(\Omega_B^\e)^3}\right).
\end{align}
\subsection{Estimates for the heat conduction equations}\label{subsec:heat}
Now we go on with establishing some control on the error terms in the heat conduction equations.
Multiplying equations~\eqref{corrector_eq:ha} and~\eqref{corrector_eq:hb} with test functions $\p\in H_0^1(\Omega)$, integrating over $\Omega$, and then integrating by parts while using the interface conditions, we are lead to
\begin{multline*}
\underbrace{\int_{\Omega_A^\e}\partial_t\left(c_A^{r,\e}\Theta_A^\e-\kappa_A\int_{Y_A}c^{r}\di{y}\,\theta_A\right)\p\di{x}}_{=:E_1^\e(t,\p)}
+\underbrace{\int_{\Omega_A^\e}\left(K_A^{r,\e}\nabla\Theta_A^\e-\kappa_AK_A^{h}\nabla\theta_A\right)\cdot\nabla\p\di{x}}_{=:E_2^\e(t,\p)}\\
+\underbrace{\int_{\Omega_A^\e}\left(\left(c_A^{r,\e}\Theta_A^\e+\gamma_A^{r,\e}:\nabla U_A^\e\right)v^{r,\e}\right)\cdot\nabla\p\di{x}}_{=:E_3^\e(t,\p)}\\
+\underbrace{\int_{\Omega_A^\e}\left(K_A^{r,\e}\nabla\Theta_A^\e-\kappa_AK_A^{h}\nabla\theta_A\right)\cdot\nabla\p\di{x}
-\int_{\Gamma^\e}\kappa_AK_A^{h}\nabla\theta_A\cdot n^\e\p\di{x}}_{=:E_4^\e(t,\p)}\\
+\underbrace{\int_{\Omega_B^\e}\left(\partial_t\left(c_B^{r,\e}\Theta_B^\e\right)-\recon{\partial_t(c_B^{r}\Theta_B)}\right)\p\di{x}}_{=:E_6^\e(t,\p)}
+\underbrace{\int_{\Omega_B^\e}\left(\partial_t\left(\e\gamma_B^{r,\e}:\nabla U_B^\e\right)-\recon{\partial_t(\gamma_B^{r}:\nabla_y U_B)}\right)
\p\di{x}}_{=:E_6^\e(t,\p)}\\
+\underbrace{\int_{\Omega_B^\e}\left(c_B^{r,\e}\Theta_B^\e v^{r,\e}-\recon{c_B^{r}\Theta_B}\recon{v^{r}}\right)\cdot\nabla\p\di{x}}_{=:E_7^\e(t,\p)}
+\underbrace{\e\int_{\Omega_B^\e}\left(\gamma_B^{r,\e}:\nabla U_B^\e v^{r,\e}-\recon{\gamma^{r}_B}:\nabla\recon{U_B}\recon{v^{r}}\right)\cdot\nabla\p\di{x}}_{=:E_8^\e(t,\p)}\\
+\underbrace{\e^2\int_{\Omega_B^\e}\left(K_B^{r,\e}\nabla\Theta_B^\e-\recon{K_B^{r}}\nabla\recon{\Theta_B}\right)\cdot\nabla\p\di{x}}_{=:E_9^\e(t,\p)}\\
=\underbrace{-\int_{\Gamma^\e}W_\Gamma^{r,\e}\p\di{\sigma}+\int_{\Omega_A^\e}\kappa_AW_{\Gamma}^{h}\p\di{x}}_{=:E_{10}^\e(t,\p)}
+\underbrace{\int_{\Omega_A^\e}\kappa_A\int_{\Gamma}K_B^{r}\nabla_y\Theta_B\cdot n\di{s}\,\p\di{x}-\e^2\int_{\Gamma^\e}\recon{K_B^{r}}\nabla\recon{\Theta_B}\cdot n^\e\p\di{\sigma}}_{=:E_{11}^\e(t,\p)}\\
+\underbrace{\int_{\Omega_A^\e}\left(f_{\theta_A}^{r,\e}-\kappa_A\int_{Y_A}f_{\theta_A}^{r}\di{y}\right)\p\di{x}+\int_{\Omega_B^\e}\left(f_{\theta_B}^{r,\e}-\recon{f_{\theta_B}^{r}}\right)\p\di{x}}_{=:E_{12}^\e(t,\p)}+E_{13}^\e(t,\p)
\end{multline*}
where
$$
E_{13}^\e(t,\p)=\e\int_{\Omega_B^\e}\left(\recon{\dive_x\left(K_B^{r}\nabla\Theta_B\right)}+\dive\recon{K_B^{r}\nabla_x\Theta_B}\right)\p\di{x}.
$$
%
%
%
For the first term, we see that
\begin{multline*}
E_1^\e(t,\Theta_{\cor0}^\e)=\int_{\Omega_A^\e}\rho_Ac_{A}\partial_t\left(\Theta_A^\e(J^\e-\frac{1}{|Y_A(0)|}\left|Y_A\right|)\right)\Theta_{\cor0}^\e\di{x}\\
	+\int_{\Omega_A^\e}\rho_Ac_{A}\frac{1}{|Y_A(0)|}\partial_t\left(\left|Y_A\right|\Theta_\err^\e\right)\Theta_\err^\e\di{x}+R_2^\e(t),
\end{multline*}
where
\begin{equation*}
R_2^\e(t)=\e\int_{\Omega_A^\e}\rho_Ac_{A}\frac{1}{|Y_A(0)|}\partial_t\left(\left|Y_A\right|\Theta_\err^\e\right)m^\e\recon{\widetilde{\Theta}}.
\end{equation*}
Using the regularity estimates for $\widetilde{\Theta}$ and $\Theta_A^\e$, it is easy to see that there is a constant $c>0$ independent of $\e$ such that (for every $\delta>0$)
$$
\left|\int_0^t R_2^\e(\tau)\di{\tau}\right|\leq \int_0^t\left\|\Theta_\err^\e(\tau)\right\|^2\di{\tau}+\delta\left(\left\|\Theta_\err^\e(t)\right\|^2-\left\|\Theta_\err^\e(0)\right\|^2\right)+C_\delta t\e^2.
$$
With this estimate and Lemma~\ref{lemma:average}, we then get
\begin{multline}\label{estimate:E1}
\int_0^tE_1^\e(\tau,\Theta_{\cor0}^\e)\di{\tau}\\ \geq C_1\left(\left\|\Theta_\err^\e(t)\right\|_{\Omega_A^\e}^2-\left\|\Theta_\err^\e(0)\right\|_{\Omega_A^\e}^2\right)-C_2\left(\int_0^t\left\|\Theta_\err^\e(\tau)\right\|_{\Omega_A^\e}^2+\e\left\|\Theta_{\cor}^\e(\tau)\right\|_{H^1(\Omega_A^\e)}\di{\tau}+t\e^2\right).
\end{multline}
For the dissipation term of $\Omega_A^\e$, namely $E_2^\e$, we start by noticing that
\begin{align*}
\kappa_A\gamma_A^{h}:\nabla u_A+\kappa_A\int_{Y_A}\gamma_A^{r}:\nabla_y\tau^u\dy\,\theta_A=\kappa_A\int_{Y_A}\gamma_A^{r}:\left(\nabla u_A+\nabla_y\widetilde{U}\right)\di{y}
\end{align*}
and decompose
\begin{align*}
E_2^\e(t,\p)&=\int_{\Omega_A^\e}\partial_t\left(\gamma_A^{r,\e}:\nabla U_\cor^\e\right)\p\di{x}
	+\int_{\Omega_A^\e}\partial_t\left(\left(\gamma_A^{r,\e}-\recon{\gamma_A^{r}}\right):\nabla(U_A^\e-U_\cor^\e)\right)\p\di{x}\\
	&\quad+\int_{\Omega_A^\e}\partial_t\left(\recon{\gamma_A^{r}}:\nabla(U_A^\e-U_\cor^\e)-\kappa_A\int_{Y_A}\gamma_A^{r}:\left(\nabla u_A+\nabla_y\widetilde{U}\right)\di{y}\right)\p\di{x}.
\end{align*}
Applying Lemma~\ref{lemma:average} to 
$$
f=\partial_t\left(\gamma_A^{r}:\left(\nabla U_A+\nabla_y\widetilde{U}\right)-\kappa_A\int_{Y_A}\gamma_A^{r}:\left(\nabla u_A+\nabla_y\widetilde{U}\right)\di{y}\right),
$$
leads to
\begin{align*}
\left|E_2^\e(t,\p)\right|\leq C\left(\left(\|\nabla U_\cor\|_{L^2(\Omega_A^\e)^{3\times3}}+\|\nabla\partial_tU_\cor\|_{L^2(\Omega_A^\e)^{3\times3}}\right)\|\p\|_{L^2(\Omega_A^\e)}+\e\|\p\|_{H^1(\Omega_A^\e)}\right).
\end{align*}
%
In the case of $E_3^\e$, the estimate  $\e^{-1}\|v^\e\|_{L^\infty}(\Omega)\leq C$ (see \eqref{s:estimate_movement}) implies
\begin{align}\label{estimate:E3}
\left|E_3^\e(\tau,\p)\right|\leq C\e\|\nabla\p\|_{L^2(\Omega_A^\e)}\quad\text{for all}\ \p\in H^1(\Omega).
\end{align}
For handling the heat conduction functional, 
\begin{multline*}
E_4^\e(t,\p)=\int_{\Omega_A^\e}K_A^{r,\e}\nabla\Theta_\cor^\e\cdot\nabla\p\di{x}\\
	+\int_{\Omega_A^\e}\left(K_A^{r,\e}\nabla(\Theta_A^\e-\Theta_\cor^\e)-\frac{1}{|Y_A(0)|}K_A^{h}\nabla\Theta_A\right)\cdot\nabla\p\di{x}\\
	-\int_{\Gamma^\e}\frac{1}{|Y_A(0)|}K_A^{h}\nabla\Theta_A\cdot n^\e\p\di{x},
\end{multline*}
the strategy is exactly the same as with dealing with the $I_1^\e$-estimate of the mechanical part, see estimate~\eqref{estimate_i1}, which then leads to
\begin{equation}\label{estimate:E4}
E_4^\e(t,\Theta_{\cor0}^\e)\geq C_1\left\|\nabla\Theta_\cor^\e\right\|^2_{L^2(\Omega_A^\e)}-C_2\e(\left\|\Theta_{\cor0}^\e\right\|_{H^1(\Omega_A^\e)}+1+\e).
\end{equation}
Now, turning our attention to the next to functionals, $E_5^\e$, it follows easily from Lemma~\ref{lemma:estimates_trafo} that
\begin{align}\label{estimate:E5}
\int_0^tE_5^\e(\tau,\Theta_{\cor0}^\e)\di{\tau}&\geq C_1\left(\left\|\Theta_\err^\e(t)\right\|^2_{L^2(\Omega_B^\e)}
	-\|\Theta_\err^\e(0)\|_{L^2(\Omega_B^\e)}^2\right)-\int_0^t\left\|\Theta_\err^\e(\tau)\right\|_{L^2(\Omega_B^\e)}^2\di{\tau}-\e^2.
\end{align}
Estimates for the dissipation error terms, $E_5^\e$-$E_7^\e$, are given by
\begin{align}
|E_6^\e(t,\p)|&\leq C\e\left\|\p\right\|_{L^2(\Omega_B^\e)}\left(\left\|\nabla U_\err^\e\right\|_{L^2(\Omega_B^\e)^{3\times3}}+\left\|\nabla\partial_tU_\err^\e\right\|_{L^2(\Omega_B^\e)^{3\times3}}+\e\right),\label{estimate:E6}\\
|E_7^\e(t,\p)|&\leq C\e\left\|\nabla\p\right\|_{L^2(\Omega_B^\e)}\left(\left\|\Theta_\err^\e\right\|_{L^2(\Omega_B^\e)}+\e\right),\label{estimate:E7}\\
|E_8^\e(t,\p)|&\leq C\e^2\left\|\nabla\p\right\|_{L^2(\Omega_B^\e)}\left(\left\|\nabla U_\err^\e\right\|_{L^2(\Omega_B^\e)^{3\times3}}+\e\right)\label{estimate:E8}.
\end{align}
Similarly, we obtain
\begin{align}\label{estimate:E9}
E_9^\e(t,\Theta_{\cor0}^\e)\geq C_1\e^2\left\|\nabla\Theta_\err^\e\right\|^2_{L^2(\Omega_B^\e)}-C_2\e^2.
\end{align}
Taking a look at the interface velocity terms, we get
\begin{align*}
\left|E_{10}^\e(t,\p)\right|\leq\left|\int_{\Gamma^\e}\left(W_\Gamma^{r,\e}-\e\recon{W_\Gamma^{r}}\p\di{\sigma}\right)\right|
+\left|\int_{\Gamma^\e}\e\recon{W_\Gamma^{r}}\p\di{\sigma}+\int_{\Omega_A^\e}\kappa_AW_{\Gamma}^{h}\p\di{x}\right|.
\end{align*}
Using Lemma~\ref{lemma:estimates_trafo} for the functional $E_{11}^\e$, cf.~\cite{MV13}, the estimates on the functions that are involved, and our assumptions on the data, it is straightforward to show 
\begin{align}\label{estimate:E10}
\left|E_{10}^\e(t,\p)\right|+\left|E_{11}^\e(t,\p)\right|+\left|E_{12}^\e(t,\p)\right|\leq C\e\left(\|p\|_{H^1(\Omega_A^\e)}+\|\p\|_{L^2(\Omega_B^\e)}\right).
\end{align}
Finally, for the functional $E_{13}^\e$ catching some of ther terms arising in the elliptic part for $\Theta_{\err}$, we get
\begin{align}\label{estimate:E12}
\left|E_{13}^\e(t,\Theta_{\cor0})\right|\leq C\e\left(\left\|\Theta_{\err}\right\|+\e\right).
\end{align}
%
%
Summarizing those estimates~\eqref{estimate:E1}-\eqref{estimate:E12} and using Young's and Gronwall's inequalities, we arrive at
\begin{multline}\label{estimate_heat}
\|\Theta_\err^\e\|_{L^\infty(S\times\Omega)}+\|\nabla\Theta_\cor^\e\|_{L^2(S\times\Omega_A^\e)^3}+\e\|\nabla\Theta_\err\|_{L^2(S\times\Omega_B^\e)^3}\\
\leq C\big(\|\nabla U_\cor\|_{L^2(S\times\Omega_A^\e)^{3\times3}}+\e\|\nabla U_\err\|_{L^2(S\times\Omega_B^\e)^{3\times3}}\\ +\|\nabla \partial_tU_\cor\|_{L^2(S\times\Omega_A^\e)^{3\times3}}+\e\|\nabla \partial_tU_\err\|_{L^2(S\times\Omega_B^\e)^{3\times3}}+\sqrt{\e}+\e\big).
\end{multline}
\begin{remark}\label{rem:heat}
With~\eqref{estimate_heat} at hand, we conclude that estimates for $\nabla U_\cor^\e$ and $\nabla\partial_tU_\cor^\e$ will also lead to corresponding corrector estimates for the heat part.
\end{remark}

\subsection{Overall estimates}\label{subsec:overall}
Here, we combine the estimates from the preceding sections, Section~\ref{subsec:momentum} and Section~\ref{subsec:heat}.
It is clear that the following statement now follows directly from estimates~\eqref{estimate_mech} and~\eqref{estimate_heat}.
\begin{theorem}[Corrector for Weakly Coupled Problem]\label{cor1}
If we reduce our problem to a weakly coupled problem, that is, if we assume either $\alpha_A=\alpha_B=0$ (together with (A1)) or $\gamma_A=\gamma_B=0$ (together with (A2)), we have the following corrector estimate:
\begin{multline*}
\|\Theta_\err^\e\|_{L^\infty(S\times\Omega)}+\|U_{\err}^\e\|_{L^\infty(S;L^2(\Omega)^{3}}+\|\nabla\Theta_\cor^\e\|_{L^2(S\times\Omega_A^\e)^3}+\|\nabla U_{\cor}^\e\|_{L^\infty(S;L^2(\Omega_A^\e))^{3\times3}}\\
+\e\|\nabla\Theta_\err\|_{L^2(S\times\Omega_B^\e)^3}+\e\left\|\nabla U_\cor^\e\right\|_{L^\infty(S;L^2(\Omega_B^\e))^{3\times3}}\leq C (\sqrt{\e}+\e).
\end{multline*}
\end{theorem}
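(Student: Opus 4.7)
The plan is to combine the key inequalities \eqref{estimate_mech}, \eqref{estimate_mechtime1}, and \eqref{estimate_heat} derived in Sections~\ref{subsec:momentum} and~\ref{subsec:heat}. In the fully coupled setting these three estimates form a circular system: the mechanical bound depends on $\|\Theta_\err^\e\|_{L^2(\Omega)}$, while the heat bound depends on $\|\nabla U_\cor^\e\|$ and, through mechanical dissipation, on $\|\nabla\partial_t U_\cor^\e\|$, the latter of which costs yet another $\|\partial_t\Theta_\err^\e\|$ via \eqref{estimate_mechtime1}. Each of the two subcases listed in Theorem~\ref{cor1} severs exactly one branch of this cycle, so that either a direct substitution or a Gronwall argument in $t$ closes the estimate at the advertised rate $\sqrt{\e}+\e$.

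For $\alpha_A=\alpha_B=0$ (no thermal stress) the functionals $I_3^\e$ and $I_4^\e$ in Section~\ref{subsec:momentum} vanish identically, so \eqref{estimate_mech} loses its $\|\Theta_\err^\e\|_{L^2(\Omega)}$-term and yields the pointwise-in-$t$ bound $\|U_\err^\e(t)\|_{L^2(\Omega)^3}+\|\nabla U_\cor^\e(t)\|_{L^2(\Omega_A^\e)}+\e\|\nabla U_\cor^\e(t)\|_{L^2(\Omega_B^\e)}\leq C(\sqrt{\e}+\e)$. The mechanical-dissipation term $\partial_t(\gamma_A^{r,\e}:\nabla U_A^\e)$ is still present in the heat equation, so the right-hand side of \eqref{estimate_heat} still contains $\|\nabla\partial_tU_\cor^\e\|_{L^2(S;L^2)}$ and its $\Omega_B^\e$-counterpart. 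To control these I differentiate \eqref{corrector_eq:ma}--\eqref{corrector_eq:mb} in time, test with $\partial_tU_{\cor0}^\e$, and invoke Assumptions (A2) together with \eqref{estimate_i1time}--\eqref{estimate_i5time}, obtaining the analogue of \eqref{estimate_mechtime1} which, because the $\alpha$-terms are absent, similarly loses its dependence on $\Theta_\err^\e$ and $\partial_t\Theta_\err^\e$. Inserting both mechanical bounds into \eqref{estimate_heat} then closes the argument.

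For $\gamma_A=\gamma_B=0$ (no mechanical dissipation) the situation is dual: the $\partial_t(\gamma_A^{r,\e}:\nabla U_A^\e)$-type contributions $E_2^\e$ and $E_6^\e$ disappear from the heat equation, so no $\partial_tU$-terms enter the right-hand side of \eqref{estimate_heat} and Assumptions (A1) suffice. However, \eqref{estimate_mech} still carries the $\|\Theta_\err^\e\|_{L^2(\Omega)}$-term, so I do not use \eqref{estimate_heat} in its final Gronwall-integrated form but rather the pre-Gronwall energy inequality produced by \eqref{estimate:E1}--\eqref{estimate:E12}, which (under $\gamma=0$) has the schematic form $\|\Theta_\err^\e(t)\|^2+\int_0^t\|\nabla\Theta_\cor^\e\|_{\Omega_A^\e}^2\,\mathrm{d}\tau+\e^2\int_0^t\|\nabla\Theta_\err^\e\|_{\Omega_B^\e}^2\,\mathrm{d}\tau\leq C\int_0^t\bigl(\|\Theta_\err^\e(\tau)\|^2+\|\nabla U_\cor^\e(\tau)\|^2+\e^2\|\nabla U_\cor^\e(\tau)\|_{\Omega_B^\e}^2\bigr)\,\mathrm{d}\tau+C(\e+\e^2)$. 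Substituting the pointwise-in-$t$ form of \eqref{estimate_mech}, namely $\|\nabla U_\cor^\e(\tau)\|^2+\e^2\|\nabla U_\cor^\e(\tau)\|_{\Omega_B^\e}^2\leq C(\e+\e^2+\|\Theta_\err^\e(\tau)\|^2)$, and then applying Gronwall's inequality yields $\|\Theta_\err^\e\|_{L^\infty(S;L^2(\Omega))}^2\leq C(\e+\e^2)$. Feeding this back into \eqref{estimate_mech} and into the same energy inequality then produces every remaining term in \eqref{goal_est} at rate $\sqrt{\e}+\e$.

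The only genuine obstacle is the circularity noted above: in the full problem \eqref{estimate_heat} requires $\|\nabla\partial_tU_\cor^\e\|$, which by \eqref{estimate_mechtime1} costs $\|\partial_t\Theta_\err^\e\|$, and no independent bound on the latter arises from the available energy identities. The two scenarios of Theorem~\ref{cor1} are designed precisely to remove one side of this dependency --- the $\partial_tU$-coupling in the heat equation ($\gamma=0$) or the $\Theta$-coupling in the momentum equation ($\alpha=0$) --- and in both subcases the resulting triangular system closes.
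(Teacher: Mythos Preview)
Your argument is correct and follows the same route as the paper, which simply states that the result ``follows directly from estimates~\eqref{estimate_mech} and~\eqref{estimate_heat}''; you have fleshed out what that actually entails, in particular the need for the time-differentiated mechanical bound~\eqref{estimate_mechtime1} in one subcase and the return to the pre-Gronwall energy inequality with a substitution-then-Gronwall step in the other.

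One point deserves comment. You pair Assumptions~(A2) with the case $\alpha_A=\alpha_B=0$ and (A1) with $\gamma_A=\gamma_B=0$, whereas the theorem as stated does the opposite. Your pairing is the logically consistent one: when $\alpha=0$ the dissipation functionals (the $\partial_t(\gamma^{r,\e}:\nabla U^\e)$-terms) are still present in the heat error identity, so $\|\nabla\partial_tU_\cor^\e\|$ remains on the right-hand side of~\eqref{estimate_heat} and one must invoke~\eqref{estimate_mechtime1}, which is derived under~(A2); when $\gamma=0$ precisely those functionals vanish, no time derivative of $U$ enters, and~(A1) suffices. This also matches the remark in the introduction that ``the regularity requirements are higher in the case of no thermal stress compared to the case of mechanical dissipation''. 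The labeling of (A1) versus (A2) in the theorem statement therefore appears to be a typographical slip, and your version is the correct one.
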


Moreover, for the heat part, we take $\Theta_{\cor0}$ and for the mechanical part $\partial_t U_{\cor0}^\e$ as a test function, sum the weak formulations, integrate over $(0,t)$ and get
\begin{multline}
\int_0^t\left(\sum_{i=1}^6I_i^\e(\tau,\partial_tU_{\cor0}^\e)+\sum_{i=1}^9E_i^\e(\tau,\Theta_{\cor0}^\e) \right)\di{\tau}\\
=\int_0^t\left(\sum_{i=7}^9I_i^\e(\tau,\partial_tU_{\cor0}^\e)+\sum_{i=10}^{13}E_i^\e(\tau,\Theta_{\cor0}^\e) \right)\di{\tau}.
\end{multline}
Now, we first take a view on the error terms corresponding to the coupling terms for the $\Omega_B^\e$ part for both the mechanical and the heat part, namely $I_4^\e$, $E_6^\e$, and $E_8^\e$.
While $E_8^\e$ can be controlled in terms of $\e\nabla U_\err$ and $\e\nabla\Theta_\err$ (see inequality~\eqref{estimate:E8}), this is not possible for either $I_4^\e$ or $E_6^\e$ due to the involved time derivatives.
If we take a look at the sum of those (appropriately scaled)\footnote{Assuming $\alpha_B\neq0$.} two terms, however, we see that they counterbalance each other leading to 
\begin{align}
\left|\frac{\gamma_B}{\alpha_B}I_4^\e(\tau,\partial_tU_{\cor0}^\e)+E_6^\e(\tau,\Theta_{\cor0}^\e)\right|\leq C\|\Theta_\err^\e\|^2_{L^2(\Omega_B^\e)}+\e^2\|\nabla U_\cor\|^2_{L^2(\Omega_B^\e)}+\e^2.
\end{align}
Note that with estimate~\eqref{estimate_i2time}, the $\e^2\|\nabla U_\cor\|^2_{L^2(\Omega_B^\e)}$ is resolvable via Gronwall's inequality.

This, unfortunately, does not work for the coupling parts in $\Omega_A^\e$ : Here, we would have to apply Lemma~\ref{lemma:average} at the cost of additional derivatives (we only get control in $H^1$ and not in $L^2$), which, in general, can not be compensated without structural assumptions.

As a result of this observation and the estimates collected in the previous sections, we get:

\begin{theorem}[Corrector for Microscale Coupled Problem]\label{cor2}
If we simplify our problem so that there is only coupling in the $\Omega_B^\e$ part, that is, if we assume $\alpha_A=\gamma_A=0$, we have the following corrector estimate:
\begin{multline*}
\|\Theta_\err^\e\|_{L^\infty(S\times\Omega)}+\|U_{\err}^\e\|_{L^\infty(S;L^2(\Omega)^{3}}+\|\nabla\Theta_\cor^\e\|_{L^2(S\times\Omega_A^\e)^3}+\|\nabla U_{\cor}^\e\|_{L^\infty(S;L^2(\Omega_A^\e))^{3\times3}}\\
+\e\|\nabla\Theta_\err\|_{L^2(S\times\Omega_B^\e)^3}+\e\left\|\nabla U_\cor^\e\right\|_{L^\infty(S;L^2(\Omega_B^\e))^{3\times3}}\leq C (\sqrt{\e}+\e).
\end{multline*}
\end{theorem}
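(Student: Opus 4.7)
The plan is to exploit exactly the observation made at the end of Section~\ref{subsec:overall}: under the assumption $\alpha_A=\gamma_A=0$, all coupling terms living on the matrix $\Omega_A^\e$ disappear identically. In particular, $I_3^\e$ is removed from the momentum equation and the $\gamma_A^{r,\e}{:}\nabla U_A^\e$ contribution is removed from $E_1^\e$ and $E_2^\e$; these were precisely the terms whose control required Lemma~\ref{lemma:average} at the $H^1$-test level and hence blocked the strategy in the fully coupled situation. What remains is a decoupled pair in $\Omega_A^\e$ (elasticity and heat conduction with known sources) together with a genuinely coupled microscale system inside $\Omega_B^\e$ which admits a favorable cancellation.

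First I would test the heat-part corrector equations~\eqref{corrector_eq:ha}--\eqref{corrector_eq:hb} against $\Theta_{\cor0}^\e$ and, after differentiating~\eqref{corrector_eq:ma}--\eqref{corrector_eq:mb} in time (admissible under Assumptions (A2) and Lemma~\ref{reg_epsilon}), the momentum-part against $\partial_t U_{\cor0}^\e$. After multiplying the momentum identity by the factor $\gamma_B/\alpha_B$, summing with the heat identity, and integrating over $(0,t)$, the coercive quantities
\begin{equation*}
\|e(U_\cor^\e)(t)\|_{L^2(\Omega_A^\e)}^2+\e^2\|e(U_\cor^\e)(t)\|_{L^2(\Omega_B^\e)}^2+\|\Theta_\err^\e(t)\|_{L^2(\Omega)}^2+\int_0^t\!\bigl(\|\nabla\Theta_\cor^\e\|_{L^2(\Omega_A^\e)}^2+\e^2\|\nabla\Theta_\err^\e\|_{L^2(\Omega_B^\e)}^2\bigr)\di{\tau}
\end{equation*}
appear on the left via~\eqref{estimate_i1time}--\eqref{estimate_i2time}, \eqref{estimate:E1}, \eqref{estimate:E4}, \eqref{estimate:E5}, and \eqref{estimate:E9}, while all source-type functionals $I_5^\e,\dots,I_9^\e$ and $E_{10}^\e,\dots,E_{13}^\e$ already deliver the desired $C\e(\|\p\|_{H^1(\Omega_A^\e)}+\|\p\|_{L^2(\Omega_B^\e)})$ bound.

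The decisive step is the cancellation of the two surviving $\Omega_B^\e$-coupling remnants. Individually, $I_4^\e(\tau,\partial_t U_{\cor0}^\e)$ hides a factor $\e\partial_t(\alpha_B^{r,\e}\Theta_B^\e)$ and $E_6^\e(\tau,\Theta_{\cor0}^\e)$ hides a factor $\partial_t(\e\gamma_B^{r,\e}{:}\nabla U_B^\e)$; neither is controllable alone, since doing so would demand $L^2(\Omega_B^\e)$-bounds on time-derivatives of the errors that are not available. With the rescaling by $\gamma_B/\alpha_B$, however, their symbolic structures coincide and produce the bound
\begin{equation*}
\Bigl|\tfrac{\gamma_B}{\alpha_B}\,I_4^\e(\tau,\partial_t U_{\cor0}^\e)+E_6^\e(\tau,\Theta_{\cor0}^\e)\Bigr|\leq C\bigl(\|\Theta_\err^\e\|_{L^2(\Omega_B^\e)}^2+\e^2\|\nabla U_\cor^\e\|_{L^2(\Omega_B^\e)^{3\times3}}^2+\e^2\bigr)
\end{equation*}
stated in the excerpt, wherein the offending time derivatives are gone and the leftover $\e^2\|\nabla U_\cor^\e\|^2$ is exactly of the order that, combined with~\eqref{estimate_i2time}, becomes absorbable by Grönwall.

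Applying Young's inequality to absorb quadratic error terms with small prefactor on the right and then Grönwall's inequality in $t\in S$ yields the $L^\infty(S;L^2)$ and $L^2(S;H^1)$ bounds of order $\sqrt{\e}+\e$; converting symmetric gradients to full gradients via the Korn-type inequality in~\eqref{estimate_ucor} and invoking~\eqref{estimate_mechtime1} for $U_\err^\e$ completes the proof. The main obstacle I anticipate is the careful bookkeeping induced by the rescaling factor $\gamma_B/\alpha_B$: one must verify that this uniform multiplication of the momentum identity preserves the coercivity of the underlying elastic and thermal quadratic forms (which is immediate, as both are independently positive definite) and that every remaining functional in the summed identity is controlled in terms only of the coercive quantities listed above plus residuals of size $C\e$.
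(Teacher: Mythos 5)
Your proposal is correct and follows essentially the same route as the paper: testing the heat part with $\Theta_{\cor0}^\e$ and the time-differentiated momentum part with $\partial_tU_{\cor0}^\e$, exploiting that the $\Omega_A^\e$-coupling terms vanish when $\alpha_A=\gamma_A=0$, and using the $\gamma_B/\alpha_B$-weighted cancellation of $I_4^\e$ and $E_6^\e$ so that the leftover $\e^2\|\nabla U_\cor^\e\|^2_{L^2(\Omega_B^\e)}$ is absorbed via~\eqref{estimate_i2time} and Gr\"onwall. The remaining bookkeeping (Young, Korn via~\eqref{estimate_ucor}, the source functionals $I_5^\e$--$I_9^\e$ and $E_{10}^\e$--$E_{13}^\e$) matches the paper's argument.
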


\section*{Acknowledgments}
The authors are indebted to Michael B\"ohm (Bremen) for initiating and supporting this research.
AM thanks NWO MPE ``Theoretical estimates of heat losses in geothermal wells'' (grant nr. 657.014.004) for funding.

\bibliography{literature}{}
\bibliographystyle{plain}

\medskip
\medskip

\end{document}